\DeclareMathOperator{\sgn}{sgn}
\newcommand{\B}{\mathcal B}
\newcommand{\Kh}{\kappa}
\newcommand{\1}{\mathbf{1}}
\newcommand{\colind}[2]{\displaystyle\smash{\mathop{#1}^{\raisebox{.5\normalbaselineskip}{$\scriptstyle #2$}}}}
\title{Heat conduction with phase change in permafrost modules of vegetation models\thanks{%
Received... Accepted... Published online on... Recommended by....
}}
\author{David H\"otten\footnotemark[2]
        \and Jenny Niebsch\footnotemark[3]
        \and Ronny Ramlau \footnotemark[3] \and Walter Zulehner \footnotemark[3]}
\shorttitle{Heat conduction permafrost} 
\begin{document}

\maketitle

\renewcommand{\thefootnote}{\fnsymbol{footnote}}

\footnotetext[2]{davidho@pik-potsdam.de, Potsdam Institute for Climate Impact Research, Telegrafenberg 31, 14473 Potsdam, Germany, corresponding author}
\footnotetext[3]{Radon Institute for Computational and Applied Mathematics, Altenbergerstraße 69, 4040 Linz, Austria}

\begin{abstract}
We consider the problem of heat conduction with phase change, that is essential for permafrost modeling in Land Surface Models and Dynamic Global Vegetation Models. These models require minimal computational effort and an extremely robust solver for large-scale, long-term simulations. The weak enthalpy formulation of the Stefan problem is used as the mathematical model and a finite element method is employed for the discretization. Leveraging the piecewise affine structure of the nonlinear time-stepping equation system, we demonstrate that this system has a unique solution and provide a solver that is guaranteed to find this solution in a finite number of steps from any initial guess. Comparisons with the Neumann analytical solution and tests in the Lund-Potsdam-Jena managed Land vegetation model reveal that the new method does not introduce significantly higher computational costs than the widely used DECP method while providing greater accuracy. In particular, it avoids a known nonphysical artifact in the solution.
\end{abstract}

\begin{keywords}
Stefan problem, enthalpy formulation, permafrost, decoupled energy conservation parametrization, piecewise affine function, Katzenelson algorithm
\end{keywords}

\begin{AMS}
65M60, 65M22
\end{AMS}

\section{Introduction} 
Human activities have caused a transformation of all parts of the earth system, including atmosphere, land, oceans and cryosphere. The need of discerning and evaluating the past and potential future impacts of humans on their environment renders process-based computer models as an indispensable tool, since they allow for the consideration of counterfactual scenarios. Dynamic Global Vegetation Models (DGVMs) and Land Surface Models (LSMs) represent the biophysical water, energy and substance cycles that happen within land systems and in exchange with the other system parts.

While the governing laws of the involved processes come from natural sciences, mathematical modeling and numerical analysis is often necessary to formulate and solve the resulting sets of equations. In particular for cases of nonlinear dynamics, the standard methods of calculating numerical solutions are often either not applicable or too slow to be viable in long-term, global simulation runs. One example of such dynamics is the conduction of heat within the soil for regions where phase changes of the water in the soil plays a significant role in the ground thermal regime. In permafrost regions, which cover up to 22\% of the area of the northern hemisphere \cite{Obu2021}, the top layer of the soil usually thaws during summer and refreezes during winter.  Without considering the release and absorption of latent heat which makes the set of differential equations nonlinear, the temperature for this type of ground cannot be accurately simulated.

The numerical solution of the heat equation with phase change, commonly referred to as the Stefan problem, is a well-established and extensively studied topic in numerical analysis \cite{Crank1984, Rubenstein1971}. Numerous methods and results are available also for the enthalpy formulation adopted in this study \cite{Voller1981, Elliott1987, Nochetto1991, Dipietro2015}. In the field of geosciences, the enthalpy formulation is employed to model and numerically solve the energy balance in ice sheet simulations, as for example in the Parallel Ice Sheet Model (PISM) \cite{Aschwanden2012}.

However, the application of this formulation in DGVMs and LSMs presents unique challenges: (1) handling highly heterogeneous and discontinuous data for thermal soil properties due to the layered soil structure,  (2) ensuring that the method remains stable and robust while solving more than 60,000 soil columns with heterogeneous forcing and parameters in parallel over simulation periods exceeding 4,000 years (3) achieving minimal computational cost to enable such large-scale simulations.

Existing approaches implemented in DGVMs and LSMs vary in terms of the governing equations and discretization schemes employed. A comprehensive overview of the methods used in different software models is provided in Table 1 of \cite{Tubini2021}.

Most widely used is the decoupled energy conservation parametrization (DECP), wherein the heat conduction timestep uses the standard heat equation, while the effects of phase changes are incorporated afterwards by a correction that is based on an energy conservation argument \cite{CLM52018, Niu2011NoahMP, Ekici2014JSBACH, Schaphoff2018, Masson2021SURFEX, Nord2021, Wania2009}. Typically, the heat conduction timestep in DECP is solved using an implicit method such as Crank-Nicolson \cite{CLM52018, MPI2024, Masson2021SURFEX, He2023, Nord2021, Wania2009}. However, this method is known to suffer from an artificial stretch of the freezing region, which leads to inaccuracies \cite{Nicolskiy2007}. Another commonly employed approach to circumvent the nonlinearity in time stepping is the use of an apparent heat capacity that accounts for the phase-change effect \cite{Gouttevin2012SoilFreezing, Best2011JULES}. 

Methods that do solve the nonlinear time stepping system often lack a proven globally convergent solver for the time stepping system, which would be desirable for large-scale and long-term simulations  \cite{Tubini2021}.

To address this research gap, this study proposes a numerical method specifically designed for simulating soil temperatures in DGVMs and LSMs. The method is implemented and tested within the model Lund-Potsdam-Jena managed Land (LPJmL) which is a DGVM, to demonstrate its effectiveness for the intended application. The finite element method is based on the weak enthalpy formulation, which inherently incorporates phase changes, thereby eliminating the need for a posteriori corrections of numerical results. The time-stepping scheme is implicit, with both backward Euler and Crank–Nicolson methods developed and evaluated. The nonlinear solver leverages the piecewise-affine structure of the system, ensuring both computational efficiency and global convergence to the exact solution within a finite number of iterations. This solver is commonly known as the Katzenelson algorithm \cite{Katzenelson1965}.

We begin by presenting the weak enthalpy formulation of the Stefan problem, followed by the derivation of the discretized model in Section \ref{sec: Numerical method} and the corresponding system of nonlinear equations in Section \ref{sec: System of nonlinear equations}. In Section \ref{sec:Solving the implicit system}, we discuss theory on piecewise affine functions and tridiagonal matrices to prove that solutions to the system of equations are roots of a piecewise affine homeomorphism. This gives existence and uniqueness of discrete solutions as well as sufficient conditions for the Katzenelson solver to converge globally. Section \ref{sec: results} presents the results of a mesh convergence study and soil temperature simulations in LPJmL using the proposed method. Finally, Section \ref{sec:discussion} compares the new method to the DECP approach and discusses the key differences between the two.

For readers that are not familiar with the Stefan problem and its enthalpy formulation we recommend the introductory texts \cite{Alexiades1993} and \cite{Visintin2008}. An introduction to the finite element method that fits well with the terminology used in this paper can be found in \cite{Zulehner2008} and \cite{Zulehner2011}. Piecewise affine function theory is introduced in \cite{Scholtes2012}. 

\section{Mathematical model}  
LPJmL computes output variables on a spatial grid with a resolution of \(0.5^\circ\) in both latitude and longitude. As is standard practice in LSMs and DGVMs, the soil temperature within each grid cell is assumed to vary only with depth, while remaining uniform in the horizontal directions. Consequently, heat conduction is modeled as a one-dimensional process in space. The spacetime domain is defined as  
\begin{align*}
G := (0, D) \times (0, T),
\end{align*}
where $D$ denotes the maximum depth of the soil column and $T$ represents the total simulation duration. 

On this domain, the unknown temperature is denoted by $u(x, t)$. The given spatially dependent data include the frozen and unfrozen volumetric heat capacities, $c_f(x)$ and $c_u(x)$, respectively, as well as the frozen, mushy and unfrozen thermal conductivities, $k_f(x), k_m(x)$ and $k_u(x)$. Additionally, $L(x)$ represents the volumetric latent heat of fusion.

A Dirichlet boundary condition $s(t)$ is assigned at the surface and a zero flux Neumann boundary condition at the bottom. The initial condition is given as $u_0(x)$.

The heat conduction equation with phase change can be expressed either in terms of temperature or enthalpy. The classical two-phase Stefan problem uses temperature and assumes that the liquid and solid phases are separated by an interface with an unknown differentiable trajectory  
\begin{align*}
X: [0, T] \mapsto (0, D) \,.
\end{align*}

The spatial domain is then partitioned into two regions:  
\begin{equation}
\begin{aligned}
G^- &:= \{(x, t) \in G \mid x < X(t)\}, \label{eq:partition_of_G_Stefan} \\
G^+ &:= \{(x, t) \in G \mid x > X(t)\} \,.
\end{aligned}
\end{equation}
Within the subdomains $G^-$ and $G^+$, the standard heat equation holds. The principle of energy conservation, applied to the moving interface, leads to the Stefan condition at the free boundary, as expressed in equation (\ref{eq:Stefan_StefCond}).

\vspace{5pt}
\paragraph{Stefan problem  \cite{Alexiades1993}} {
Given boundary data $u_0(x)>0$ and $s(t)<0$ for all $(x,t)\in G$, find a curve $X(t)$ that divides $G$ in $G^-$ and $G^+$ as defined by (\ref{eq:partition_of_G_Stefan}) and a function $u(x,t)$ such that 
\begin{align}
     c_f(x) u(x,t)_t&=(k_f(x)u(x,t)_{x})_x  \quad &&(x,t)\in G^- \nonumber  \\ 
     c_u(x) u(x,t)_t&=(k_u(x)u(x,t)_{x})_x\quad &&(x,t)\in G^+ \nonumber \\[5pt]
     u(X(t),t) &= 0 \quad &&(0\leq t \leq T) \nonumber \\
     L(X(t)) X(t)_t &=k_f(X(t)) u(X(t)^-,t)_x -k_u(X(t)) u(X(t)^+,t)_x &&(0< t < T) \label{eq:Stefan_StefCond} \\[5pt]
     u(0,t) &= s(t), \quad  u(D,t)_x = 0 \quad &&(0\leq t \leq T) \nonumber\\
     u(x,0)&=u_0(x), ~~\,\quad X(0) = 0 &&(0<x<D) \nonumber.
\end{align}
}

A limitation of this formulation is that it can represent only a single frozen-unfrozen interface. This renders the model inadequate for permafrost soils, where one or more unfrozen ground layers may be entirely enclosed within an otherwise frozen soil column — a phenomenon known as \textit{closed talik}. 

The weak enthalpy formulation overcomes this limitation by using the volumetric enthalpy $e$ as the primary variable. The relationship between the temperature $u$ and the enthalpy $e$ is given by:
\begin{equation}
\label{eq:t(e)_acrossphase}
u=\beta(e,x):= 
\begin{cases}
\begin{alignedat}{3}
& \tfrac{e}{c_f(x)}    \quad &&\text{for } e\leq0     \quad &&\text{(frozen)} \\
&  0              \quad &&\text{for } 0<e<L(x)      \quad &&\text{(mushy)} \\
&  \tfrac{e-L(x)}{c_u(x)}  \quad &&\text{for } e\geq L(x)    \quad &&\text{(unfrozen)} \, . 
\end{alignedat}
\end{cases}
\end{equation}

By additionally defining the thermal conductivity as a piecewise function of temperature:  
\begin{align*}
k(u, x) := 
\begin{cases}
    k_u(x) \quad & \text{for } u > 0 \\
    k_m(x) \quad & \text{for } u = 0 \\
    k_f(x) \quad & \text{for } u < 0 \, ,
\end{cases}
\end{align*}
we can express the governing differential equation as:  
\begin{equation}
\label{heatcond_enthalpy}
e_t(x, t) = \bigl[k\bigl(u(x, t), x\bigr) u_x(x, t)\bigr]_x, 
\end{equation}
which holds for all $x$ within both phases $G^-$ and $G^+$, though not at the interface, and thus enables a unified description of the heat conduction process.

The weak formulation is derived from equation (\ref{heatcond_enthalpy}) by the standard procedure of multiplying both sides by test functions $\phi \in H^1(G)$, integrating over the spacetime domain $G$, and applying integration by parts to shift both temporal and spatial derivatives onto the test functions. Additionally, the space of admissible test functions is restricted to account for the essential boundary condition and the initial condition.

\vspace{5pt}
\paragraph{Weak Formulation \cite{Visintin2008}}{       
Given data $s\in L^\infty(0,T)$
and $e_0 \in L^\infty(0,D)$, find functions $e\in L^2(G)$ and $u\in L^2(0,T;\;H^1(0,D))$ with  $u(0,t)=s(t)$ for all $t\in [0,T]$ such that
\begin{align*}
u(x,t)=\beta(e(x,t),x) \quad \text{ for almost all $(x,t) \in G$ and }
\end{align*}
\begin{align} \label{eq:steafweak_main}
 \int\limits_{0}^T \int\limits_{0}^D [ e \phi_t  - k(u) u_x \, \phi_x ]\, dx \, dt =  - \int_0^D  e_0(x)  \phi(x,0) \, dx
\end{align}
for all  $\phi \in H^1(G)$ with $\phi(x,T)=0$ and $\phi(0,t)=0$.
}

\vspace{5pt}
The above formulation eliminates the need for an explicit partition of \(G\) into frozen and unfrozen regions. Hence, the number of interfaces is not predetermined but instead can emerge dynamically during the simulation, depending on the input data. In scenarios where only a single interface is present, it can be shown that this formulation is equivalent to the classical Stefan problem. In particular, the Stefan condition can be recovered from this formulation \cite{Alexiades1993}.

The existence and uniqueness of solutions to the weak enthalpy formulation, considering spatially dependent thermal conductivity but constant latent heat and heat capacity, have been proven in \cite{friedman1968stefan}.

\section{Discretization} \label{sec: Numerical method}
In the weak formulation, both the spatial and temporal derivatives are transferred to the test functions due to the lack of differentiability of $e$ and $u$ with respect to time. However, we do not employ a spacetime finite element method, because time-stepping is required to be a feature of our method. Hence, a time discretization is applied as the initial step, reducing the problem to a form that can be addressed using the standard finite element method.

\subsection{Time discretization}
We use an idea from Andreucci and Scarpa \cite{Andreucci2005LectureNO} in order to show that solutions of the weak formulation also approximately satisfy a time stepping problem.

Let ($u$, $e$) be a solution to the weak problem.  Fix a space-only test function $\varphi \in V_0=\{v \in H^1(0,D)| \, v(0)=0\}$ and let $t_1,\,t_2 \in [0,T]$ with $t_1<t_2$. For any $\epsilon>0$ with $\epsilon<\frac{t_2-t_1}{2}$ we define
\begin{align*}
\chi_\epsilon: (0,T) &\mapsto [0,1]\\ 
\chi_\epsilon(t) &:= \min\left(1, \, \tfrac{(t-t_1)_+}{\epsilon} ,\, \tfrac{(t_2-t)_+}{\epsilon}  \right)\, ,
\end{align*}
which belongs to $H^1(0,T)$ (see Figure \ref{fig:aux_funct}). 

\begin{figure}[ht]
\centering
    \includegraphics[width=70mm]{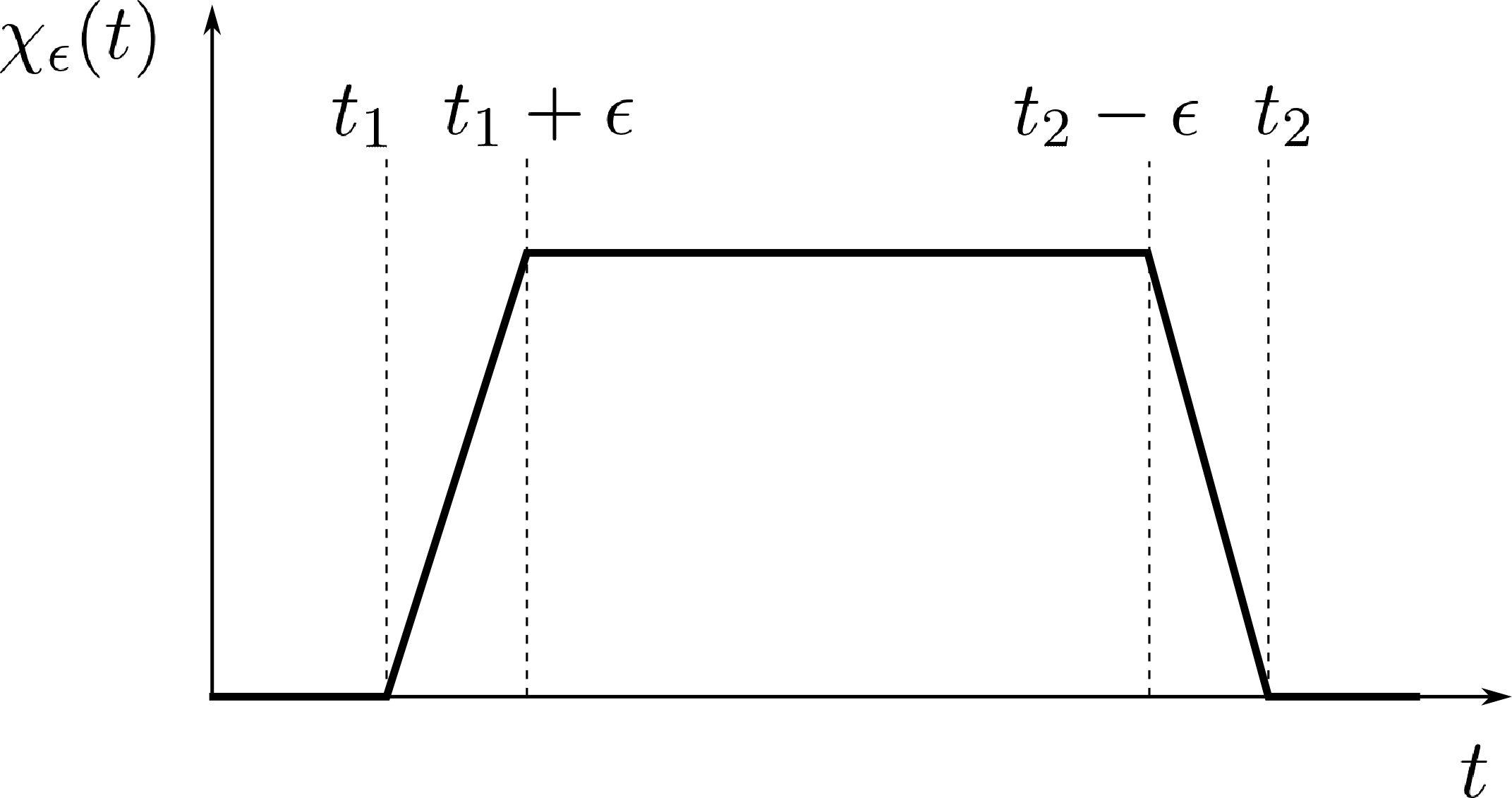}
    \caption{Illustration of the auxiliary function $\chi_\epsilon$.}
    \label{fig:aux_funct}
\end{figure}

The product $\varphi(x) \chi_\epsilon(t)$ is thus an element of $H^1(G)$ and an admissible test function of the weak formulation. Plugging it into (\ref{eq:steafweak_main}) the equation reads
\begin{eqnarray}    
\nonumber  \int\limits_{0}^T \chi_\epsilon(t)_t \int\limits_{0}^D e(x,t) \, \varphi(x)  \, dx \, dt - \int\limits_{0}^{T} \chi_\epsilon(t) \int\limits_{0}^D k(u(x,t),x) \, u(x,t)_x \, \varphi(x)_x  \, dx \, dt \\
 =  - \int_0^D   e_0(x)   \varphi(x)  \chi_\epsilon(0)  \, dx  \label{eq:weak_AltTestFnc} \,.
\end{eqnarray}
Since $\chi_\epsilon(0) = 0$, the right-hand side of (\ref{eq:weak_AltTestFnc}) vanishes. In addition the first left-hand side term can be expanded, yielding
\begin{eqnarray*}    
\nonumber \frac{1}{\epsilon}  \int\limits_{t_1}^{t_1+\epsilon} \int\limits_{0}^D e(x,t) \, \varphi(x)  \, dx \, dt -  \frac{1}{\epsilon} \int\limits_{t_2 -\epsilon}^{t_2} \int\limits_{0}^D e(x,t) \, \varphi(x)  \, dx \, dt  \\
 = \int\limits_{0}^{T} \chi_\epsilon(t) \int\limits_{0}^D k(u(x,t),x) \, u(x,t)_x \, \varphi(x)_x  \, dx \, dt \,.
\end{eqnarray*}
By the Lebesgue differentiation theorem the left-hand side limit $\epsilon \rightarrow 0$ exists for almost all $t_1$ and $t_2$. Taking the limit, it thus holds that
\begin{equation}
\begin{aligned} \label{eq:time_restricted_weak}
  \int_0^D e(x,t_1) & \, \varphi(x)  \, dx \, -  \int_0^D e(x,t_2) \, \varphi(x)  \, dx \\  &=    
   \int_{t_1}^{t_2} \int_0^D  k(u(x,t),x) \, u(x,t)_x \, \varphi(x)_x  \, dx \, dt \, 
\end{aligned}
\end{equation}
for almost all $t_1$, $t_2$ given our fixed test function $\varphi$.

Different time-stepping schemes can be obtained by applying quadrature rules to the integral of the right hand side of (\ref{eq:time_restricted_weak}). Choosing $\theta \in [0,1]$ we use the approximation
\begin{align*}
    \int_{t_1}^{t_2} f(t) dt \approx (t_2 -t_1) \big[ (1-\theta) \, f(t_1) + \theta \, f(t_2)\big]  \,,
\end{align*}
for an integrable function f. The cases $\theta = 0$, $\theta = 1$ and $\theta = \frac{1}{2}$ correspond to the \emph{left rectangle}, \emph{right rectangle}, and \emph{trapezoid} rules respectively.

For discrete times $t_0 =0,t_1, t_2,  \dots, t_N \in [0,T]$ we use the notations
\begin{align*}
e^n(x):=e(x,t_n), \quad u^n(x):=u(x,t_n), \quad s^n=s(t_n), \quad  \Delta t_{n}:= t_{n}-t_{n-1} .
\end{align*}
Based on the above argument, approximate solutions to the weak formulation can be obtained by solving this time stepping problem: 

\vspace{5pt}
\paragraph{Time Discrete Problem} {
Given $0=t_0<t_1< \dots < t_N=T \,$, $e^n \in L^2(0,D)$ and $u^n\in H^1(0,D)$ with $u^n(0)=s^n$ and $u^{n}(x)=\beta(e^{n}(x),\,x)$ almost everywhere,\\ 
find $e^{n+1} \in L^2(0,D)$ and $u^{n+1}\in H^1(0,D)$ such that
\begin{align*}
\left( \tfrac{ e^{n+1}- e^{n}}{\Delta t_{n+1}} , \varphi \right)_{L^2} = - \biggl( { (1-\theta)\, {\scriptstyle k(u^n)(u^n)_x} + \theta \, {\scriptstyle k(u^{n+1})(u^{n+1})_x }},~~ \varphi_x \biggr)_{L^2}
\end{align*}
for all  $\varphi \in V_0$ and in addition  
\begin{center}
$u^{n+1}(0)=s^{n+1}$ \quad and \quad  $u^{n+1}(x)=\beta(e^{n+1}(x),\,x)$  
\end{center}
almost everywhere.
}

\subsection{Space discretization}
To discretize the equations in space, we adapt a finite element method for the Stefan problem developed by Elliot \cite{Elliott1987}. The technique is modified to accommodate space-dependent physical parameters, a Neumann boundary condition, and various time-stepping schemes.

Consider a partition of the interval \((0, D)\) defined by the nodes $0 = x_0 < x_1 < \dots < x_{\Kh} = D$, where \(\Kh \in \mathbb{N}\). The subintervals 
\begin{align*}
T_k := (x_{k-1}, x_k), \quad k = 1, \dots, \Kh,
\end{align*}
are referred to as \emph{elements} and the collection $\mathcal{T}_h := \{ T_1, \dots, T_{\Kh} \}$ constitutes a \emph{subdivision} of $(0, D)$. The quantities
\begin{align*}
h_k := x_k - x_{k-1}, \quad k = 1, \dots, \Kh,
\end{align*}
are termed the \emph{element sizes}.
We use the finite dimensional space 
\begin{align*}
V_h := \{v\in C\big([0,D]\big)\big\vert \, v\vert_T \in P_1 \text{ for all } T\in \mathcal{T}_h \}\, ,
\end{align*}
where $P_1$ is the space of polynomials of degree $\leq 1$, as well as the test function subspace 
\begin{align*}
V_{0,h} := \{ v \in V_h \vert \, v(0)=0 \} \, .
\end{align*}

The set $\{ \varphi_0,\dots , \varphi_{\Kh}\} \subseteq V_h$, where the functions are defined by
\begin{align*}
\varphi_i(x_k)= \delta_{ik} \quad \text{ for } k= 0,\dots,\Kh
\end{align*}
and $\delta_{ij}$ is the Kronecker delta, then forms a basis of $V_h$.  Similarly, a basis of $V_{0,h}$ is given by $\{ \varphi_1, \dots , \varphi_{\Kh}\}$.

At this point, we consider the functions $e$ and $u$ to belong to the space $V_h$. This, however, introduces a complication because we still want to require the constitutive relation $\beta(e)=u$, where the function $\beta(e) \in \mathcal{L}_2([0,D])$ is defined by $x \mapsto \beta(e(x),x)$. In general $\beta(e) \notin V_h$, even when $e\in V_h$. The following definition, adapted from \cite{Ciarlet2002} rectifies this problem:

\vspace{5pt}
\begin{definition}
 The $V_h$-interpolation operator is given by (see Figure \ref{fig:int_operator})
    \begin{align*}
     \Pi_h:~ \mathcal{L}_2([0,D]) \longrightarrow V_h, \quad \quad  f \mapsto \sum_{i=0}^{\Kh} f(x_i) \varphi_i \,.
    \end{align*}
\end{definition}

\begin{figure}[ht]
\centering
    \includegraphics[width=60mm]{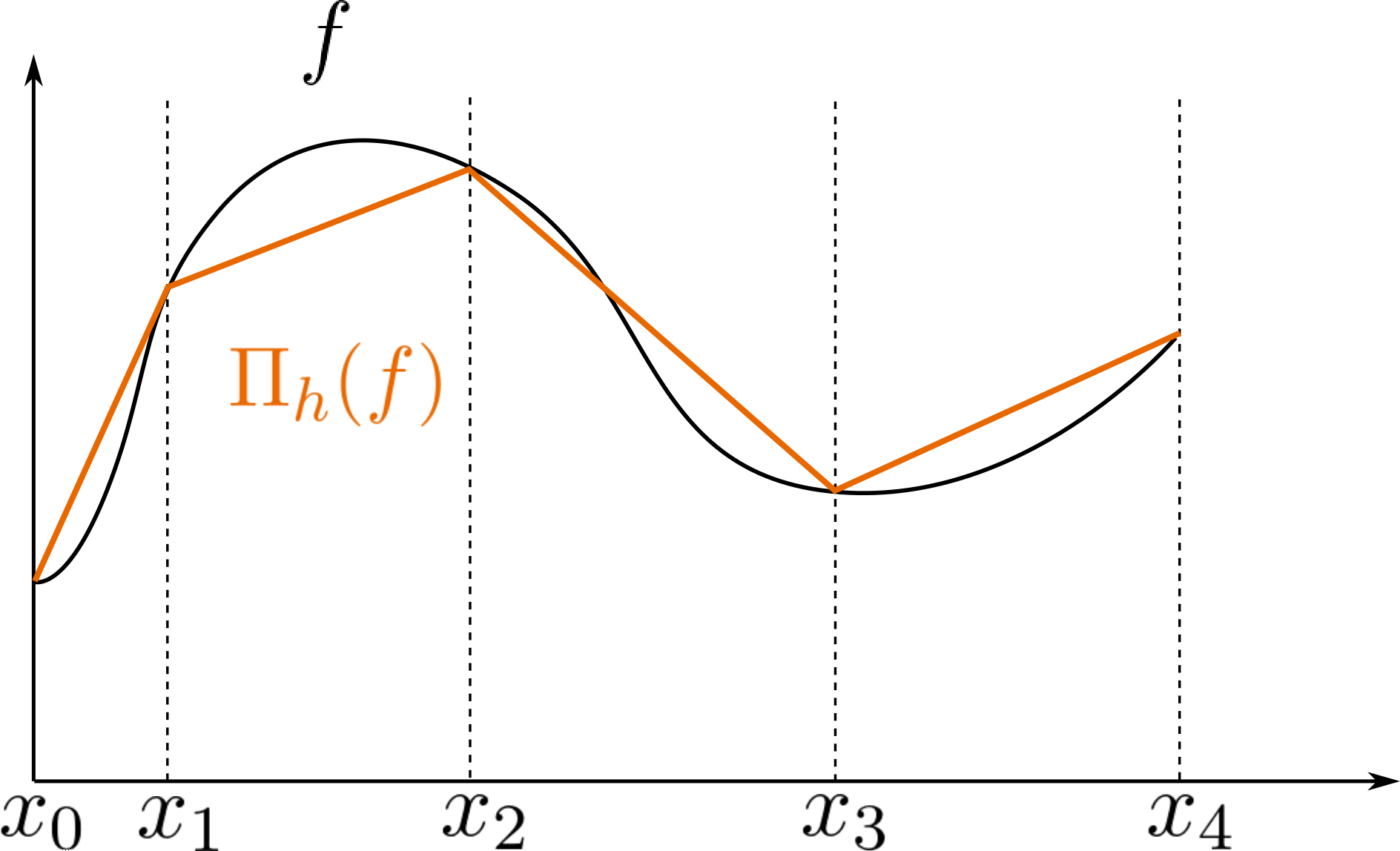}
    \caption{Illustration of the interpolation operator $\Pi_h$ applied to a function f.}
    \label{fig:int_operator}
\end{figure}

The function $ \Pi_h(f)$ is also characterized as the unique element in $V_h$ that takes the values of $f$ at all nodes. Now we can require $u =\Pi_h \circ \beta (e)$ which is consistent with the choices $u,e \in V_h$. The $\Pi_h$ operator is also used in the following Lemma/Definition. 

\vspace{5pt}
\begin{lemma}
By specifying
\begin{align*}
(f,\, g)_h:= \int_{0}^D \Pi_h(fg) dx
\end{align*}
for $f,g \in V_h$, we have defined an inner product on $V_h$.
\end{lemma}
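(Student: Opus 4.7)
My plan is to reduce $(\cdot,\cdot)_h$ to an elementary weighted sum over nodal values, from which all inner-product axioms become transparent.

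First I would observe that although $fg$ is piecewise quadratic (product of two piecewise linear functions), the definition of $\Pi_h$ only sees nodal values, so $\Pi_h(fg)$ is the piecewise linear function on $\mathcal T_h$ with $\Pi_h(fg)(x_i)=f(x_i)g(x_i)$. Integrating this piecewise linear function exactly via the trapezoidal rule on each element $T_k$ gives
\begin{equation*}
(f,g)_h \;=\; \sum_{k=1}^{\Kh} \tfrac{h_k}{2}\bigl[f(x_{k-1})g(x_{k-1})+f(x_k)g(x_k)\bigr] \;=\; \sum_{i=0}^{\Kh} w_i\, f(x_i)\,g(x_i),
\end{equation*}
where $w_0=h_1/2$, $w_{\Kh}=h_{\Kh}/2$, and $w_i=(h_i+h_{i+1})/2$ for $1\le i\le \Kh-1$. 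Since all element sizes $h_k$ are strictly positive, so are all weights $w_i$.

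With this formula in hand the three axioms of an inner product are immediate. Symmetry $(f,g)_h=(g,f)_h$ follows from commutativity of pointwise multiplication. Bilinearity in each argument follows from the linearity of pointwise evaluation and of finite summation, so the weighted-sum representation is linear in the nodal values of $f$ with $g$ fixed (and vice versa). For positive definiteness, $(f,f)_h=\sum_{i=0}^{\Kh} w_i f(x_i)^2\ge 0$, with equality if and only if $f(x_i)=0$ for every $i$; but every $v\in V_h$ is uniquely determined by its nodal values through the basis expansion $v=\sum_i v(x_i)\varphi_i$, so this forces $f\equiv 0$ in $V_h$.

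There is no real obstacle here; the only place worth being careful is the first step, namely justifying that $\Pi_h(fg)$ agrees with the exact interpolant of the piecewise quadratic $fg$ and that its integral is given exactly by the trapezoidal formula. Once that is established, the remainder is bookkeeping.
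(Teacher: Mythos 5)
Your proposal is correct and follows essentially the same route as the paper: both reduce $(f,g)_h$ to the elementwise trapezoidal sum $\sum_k \tfrac{h_k}{2}[f(x_{k-1})g(x_{k-1})+f(x_k)g(x_k)]$ (exact since $\Pi_h(fg)$ is piecewise affine) and then read off symmetry, bilinearity, and positive definiteness from the positivity of the weights and the fact that a nonzero $f\in V_h$ has a nonzero nodal value. Your explicit rewriting with weights $w_i$ is a slightly more detailed bookkeeping of the same argument.
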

\begin{proof}
Let $f,g\in V_h$. It holds that
\begin{align} \label{eq:discrete_inner_prod}
    (f,\, g)_h:=\sum_{i=1}^{\Kh} \int_{T_i} \Pi_h(fg) dx = \sum_{i=1}^{\Kh} \frac{h_i}{2} [f(x_{i-1}) g(x_{i-1}) + f(x_{i}) g(x_{i})],
\end{align}
where the last equality is due to the trapezoid rule being exact for $P_1$ functions. The bilinearity and symmetry directly follows. For strict positivity note that for a non-zero $f\in V_h$ there is a node $x_j$ with $f(x_j)\neq 0$. Hence, $(f,\,f)_h>0$. 
\end{proof}

\vspace{5pt}
\paragraph{Fully Discrete Problem} {
Given $e^n, u^n\in V_{h}$ with $u^n(0)=s^n$ and $u^{n}=\Pi_h\circ \beta(e^{n})$ , find $e^{n+1}, u^{n+1}\in V_h$ such that
\begin{align}
\left( \tfrac{e^{n+1}- e^{n}}{\Delta t_{n+1}} , \varphi \right)_{h}  = - \biggl( { (1-\theta)\, {\scriptstyle k(u^n)(u^n)_x} + \theta \, {\scriptstyle k(u^{n+1})(u^{n+1})_x }},~~ \varphi_x \biggr)_{L^2}\label{eq:space_time_discretized}
\end{align}
holds
for all  $\varphi \in V_{0,h}$ and in addition
\begin{align} \label{eq:fully_discr_beta}
\text{(i)} ~ u^{n+1}(0)=s^{n+1} \quad \text{ and } \quad \text{(ii)}~ u^{n+1}=\Pi_h \circ \beta(e^{n+1}) \, .
\end{align}
}

\vspace{5pt}
In the discrete setting the following \textit{data} is given. The thermal conductivities are assumed to be piecewise constant functions. Let 
\begin{align} \label{eq: ass_data_k}
k_i^u, k_i^m,k_i^f\ >0\text{ for all }i\in 1,\dots, \kappa \, .
\end{align}
We further define the functions
\begin{align} \label{eq:def_ki}
    k_i(u):=
    \begin{cases}
        k_i^f \quad & \text{ for } u<0 \\
        k_i^m \quad & \text{ for } u=0 \\
        k_i^u \quad & \text{ for } u>0 \, ,
    \end{cases}
\end{align}
for $i=1,\dots, \Kh$ and with that the function $k(u,x)$ by requiring for all $i\in 1, \dots, \kappa$ that
\begin{align*}
k(u,x)= k_i(u) \quad \text{for all } x\in T_i\ .
\end{align*}
For the other parameters we assume piecewise linearity. Let 
\begin{align} \label{eq: ass_data_stor}
c_i^f, ~ c_i^u, ~ L_i >0 \text{ for all } i\in 1,\dots, \kappa \, .
\end{align}
The functions $L(x), c_f(x), c_u(x) \in V_h$ are then defined by
\begin{align*}
c_f(x_i)=c_i^f,\quad c_u(x_i)=c_i^u, \quad L(x_i)=L_i \quad \text{ for all } i\in \{0,\dots,\Kh\} \,.
\end{align*}

\vspace{5pt}
\begin{remark}
    Equation (\ref{eq:discrete_inner_prod}) shows that using $(\cdot,\, \cdot)_h$ instead of $(\cdot,\, \cdot)_{L_2}$ amounts to employing the trapezoid rule to approximate the integral of the inner product. This procedure is one of the standard routes to obtain a diagonal, so called \emph{lumped} mass matrix in the system of equations, which has the desirable property of enabling an explicit time-stepping method. Indeed, we will obtain a diagonal mass matrix and an explicit scheme for $\theta=0$.
\end{remark}

\section{System of nonlinear equations} \label{sec: System of nonlinear equations}
In this section the Fully Discretized Problem will be converted to equivalent algebraic systems of nonlinear equations. 

Let $\tilde{\eta}^n \in \mathbb{R}^{\Kh+1}$ and $\gamma^n\in \mathbb{R}^{\Kh+1}$ be the coordinate vectors of $e^n$ and $u^n$ over the $V_h$-basis, such that
\begin{align*}    
   e^n = \sum_{j=0}^{\Kh} \tilde{\eta}_j^n \,\varphi_j
   \quad\text{and}\quad
   u^n = \sum_{j=0}^{\Kh} \gamma_j^n \,\varphi_j
   \quad\text{ for } n=1,\dots,N \,.
\end{align*}

The aim is to derive formulas for $\gamma^{n+1}$ and $\tilde{\eta}^{n+1}$, given $\gamma^n$ and $\tilde{\eta}^n$ such that the fully discrete problem is solved. Using the basis of $V_{0,h}$ and the linearity of the inner product, the statement that \eqref{eq:space_time_discretized} holds for all $\varphi \in V_{0,h}$  is equivalent to
\begin{equation} \label{eq:basis_i_fully_discr}
\begin{aligned}
    & \underbrace{\bigl( \tfrac{ e^{n+1}- e^{n}}{\Delta t_{n+1}}, \varphi_i \bigr)_h}_{\mathbf{A}}  = \\
& \quad \quad \quad - (1-\theta) \underbrace{\bigl( k(u^n)(u^n)_x, (\varphi_i)_x \bigr)_{L^2}}_{\mathbf{B}} - \theta \underbrace{\bigl( k(u^{n+1})(u^{n+1})_x, (\varphi_i)_x \bigr)_{L^2}}_{\mathbf{C}}  \,
\end{aligned}
\end{equation} 
being satisfied for all $\varphi_i$ with $i\in \{1,\dots,\kappa\}$.

We start with computing term $\mathbf{A}$. It holds that
\begin{align*}
\Bigl( \tfrac{e^{n+1}- e^{n}}{\Delta t_{n+1}}, \,\varphi_i \Bigr)_h
\;=\;
\Bigl(\sum_{j=0}^{\Kh}\tfrac{\tilde{\eta}_j^{n+1}-\tilde{\eta}_j^{n}}{\Delta t_{n+1}}
\,\varphi_j,\, \varphi_i \Bigr)_h
\;=\;
\tfrac{1}{\Delta t_{n+1}}
\sum_{j=0}^{\Kh} \bigl(\tilde{\eta}_j^{n+1}- \tilde{\eta}_j^{n}\bigr)
\;\bigl( \varphi_i,\varphi_j \bigr)_h.
\end{align*}
By defining the matrix \(\tilde{M}_h \in \mathbb{R}^{\Kh\times(\Kh+1)}\) via
\begin{align*}
   (\tilde{M}_h)_{ij}
   \;:=\;
   \bigl(\varphi_i,\varphi_j\bigr)_h,
   \quad
   \text{where } i=1,\dots,\Kh,\;j=0,\dots,\Kh,
\end{align*}
and denoting its $i$-th row by $\tilde{M}_h^i$, the last expression can be written as
\begin{align}
\label{eq:sol_to_disc_first_Scalprod}
\Bigl( \tfrac{e^{n+1} - e^n}{\Delta t_{n+1}},\;\varphi_i \Bigr)_h
\;=\;
\tfrac{1}{\Delta t_{n+1}}\;\tilde{M}_h^i\,
\bigl(\tilde{\eta}^{n+1} - \tilde{\eta}^{n}\bigr).
\end{align}
The matrix $\tilde{M}_h$ is called the \emph{mass matrix}. Its entries can be computed using \eqref{eq:discrete_inner_prod}:
\begin{align*}
   (\tilde{M}_h)_{ij}
   \;=\;
   \sum_{k=1}^{\Kh}\,\tfrac{h_k}{2}\;\bigl[\varphi_i(x_{k-1})\,\varphi_j(x_{k-1}) + \varphi_i(x_k)\,\varphi_j(x_k)\bigr]
   \;=\;
   \tfrac{\delta_{ij}}{2}
   \;\sum_{k=1}^{\Kh} h_k\,\bigl(\delta_{i,k-1} + \delta_{i,k}\bigr).
\end{align*}
As a result we obtain
\begin{eqnarray*}
   \tilde{M}_h = 
   \left[\begin{array}{c c}
0 &  \begin{matrix} 
\tfrac{h_1+h_2}{2} & 0                 & \cdots                  & 0    \\
0                 & \ddots            & \ddots                  & \vdots \\ 
\vdots            & \ddots            & \tfrac{h_{k-1}+h_{k}}{2} & 0 \\
0              &  \cdots            & 0                       & \tfrac{h_k}{2} 
\end{matrix}
   \end{array}\right] \, .
\end{eqnarray*}

Note that the first column of $\tilde{M}_h$ is zero, so the top entries of $\tilde{\eta}^{n+1}$ and $\tilde{\eta}^n$ do not contribute to (\ref{eq:sol_to_disc_first_Scalprod}). We define $M_h \in \mathbb R^{\kappa\times \kappa}$ to be $\tilde{M}_h$ with first column deleted and $\eta_n\in \mathbb R^{\kappa}$ to be $\tilde{\eta_n}$ with first entry deleted. The term $\mathbf{A}$ is then equal to
\begin{align*}
\tfrac{1}{{\Delta t_{n+1}}} M_h^i (\eta^{n+1} -\eta^{n})\,.
\end{align*}

\vspace{5pt}
\begin{remark}
When $s^n=0$ the first entry of $\tilde{\eta}^{n+1}$ is not uniquely defined by the boundary condition applied to $u$, as the preimage of $0$ under $\beta$ contains multiple elements. Indeed, this entry is not uniquely defined in that case, which is not a problem however, since it is not required to advance to the next time step.
\end{remark}

\vspace{5pt}
We continue with expanding the term $\mathbf{B}$. To this end, first let $j\in\{1,\dots,\Kh\}$ and consider $\big(k(u^n)(u^n)_x,~ ({\varphi_i})_x \big)_{L^2(T_j)},$ where the integral is restricted to the element $T_j$. Note that for $x\in T_j$ we have 
\begin{align*}
{\varphi_i}(x)_x=\frac{\delta_{j,i}-\delta_{j,i+1}}{h_j} \quad \text{ and } \quad u^n(x)_x = \frac{\gamma_j^n-\gamma_{j-1}^n}{h_j}\,. 
\end{align*}
Hence, we obtain
\begin{align}
    \big( k(u^n)(u^n)_x,~ ({\varphi_i})_x \big)_{L^2(T_j)} &= \int_{T_j}  k(u^n)(u^n)_x (\varphi_i)_x \, dx \nonumber  \\
    &= (\delta_{j,i}-\delta_{j,i+1}) \frac{ \gamma_j^n-\gamma_{j-1}^n }{h_j} \int_{T_j} \frac{k_j(u^n)}{h_j} dx \nonumber \\
    &=   (\delta_{j,i}-\delta_{j,i+1}) \frac{\gamma_{j}^n-\gamma_{j-1}^n }{h_j} \, \overline{k_{j}(u^n)}^j \label{eq:sol_to_disc_secScal}
\end{align}
where we have used the notation $\overline{f}^j$ for the average of a function $f$ over $T_j$. 
We define  
\begin{eqnarray}
\label{Def:Q}
  Q_j(\gamma^n):= \frac{\gamma_{j}^n-\gamma_{j-1}^n }{h_j} \, \overline{k_{j}(u^n)}^j ,
\end{eqnarray}
and show in Appendix \ref{App1} that 
\begin{eqnarray}
\label{Manip:Q}
  Q_j(\gamma^n) = \frac{k_j(\gamma_{j}^n)\gamma_{j}^n-k_j(\gamma_{j-1}^n)\gamma_{j-1}^n }{h_j}.
\end{eqnarray}

Finally, the term $\mathbf{B}$ is  computed as
\begin{align*}
    \big( k(u^n)(u^n)_x,~ ({\varphi_i})_x \big)_{L^2} &= \sum_{j=0}^{\Kh} \big( k(u^n)(u^n)_x,~ ({\varphi_i})_x \big)_{L^2(T_j)} \\
    &= \sum_{j=0}^{\Kh} (\delta_{j,i}-\delta_{j,i+1}) Q_j(\gamma^n) \, .
\end{align*}
Calculating the sum in the above expression, we arrive at
\begin{align}\label{eq:alg_F_def}
    \big( k(u^n)(u^n)_x,~ ({\varphi_i})_x \big)_{L^2} =  F_i(\gamma^n) := 
    \begin{cases}
        Q_{i}(\gamma^n)- Q_{i+1}(\gamma^n) \quad &\text{ for } i<\Kh \\ 
        Q_{i}(\gamma^n)                    \quad &\text{ for } i=\Kh \, .
    \end{cases} \, , 
\end{align}
where we have also defined the functions $F_i: \mathbb R^\Kh \rightarrow \mathbb R$.

The term $\mathbf{C}$ is calculated analogously to term $\mathbf{B}$.

Combining the results for the terms $\mathbf{A}$, $\mathbf{B}$ and $\mathbf{C}$, equation \eqref{eq:basis_i_fully_discr} reads
\begin{align*}
\tfrac{1}{{\Delta t_{n+1}}} M_h^i (\eta^{n+1} - \eta^{n}) = - (1-\theta) F_i(\gamma^n) - \theta F_i(\gamma^{n+1}) \,,
\end{align*}
which we require to hold for all $i\in\{1,\dots,\,\Kh\}$.
Pooling these equations together to a system of equations by using 
\begin{align*}
F(x):=\big[F_1(x) ~ \dots ~ F_\Kh(x)\big]^T \, ,
\end{align*}
we arrive at
\begin{align*}
  \tfrac{1}{\Delta t_{n+1}}  M_h (\eta^{n+1} - \eta^{n}) = - (1-\theta)  F(\gamma^n) - \theta F(\gamma^{n+1}) \,,
\end{align*}
which we have shown to be equivalent to (\ref{eq:space_time_discretized}) holding for all $\varphi \in V_{0,h}$. 

Now we incorporate the equation (\ref{eq:fully_discr_beta}). With the notation 
\begin{align*}
\B(\eta):= \big[\beta(\eta_1, x_1) ~ \beta(\eta_2, x_2) ~ \dots ~ \beta(\eta_{\Kh}, x_{\Kh}) \big]^T \,,
\end{align*}
the condition is equivalent to
\begin{align*}
    \gamma^{n+1} = \begin{bmatrix}s^{n+1}\\ \B(\eta^{n+1}) \end{bmatrix} \, . 
\end{align*}
To see this note that $\gamma_0^{n+1}=s^{n+1}$ by (i). Further the equation (\ref{eq:fully_discr_beta}) (ii) holds if and only if $u^{n+1}$ and $\B(\eta^{n+1})$ are equal at all nodes, by the characterization of $\Pi_h$. By defining the map
\begin{align} \label{eq:alg_F^n}
F^n: \mathbb{R}^\kappa \longrightarrow \mathbb{R}^\kappa, ~ x \mapsto \ F( \big[s^n ;~ x\big]) \, ,
\end{align}
we have $F(\gamma^n)= F^n\circ \B(\eta^n)$.  

\vspace{5pt}
\begin{proposition}
A solution of the Fully Discrete Problem is implicitly defined by the nonlinear system
\begin{align} \label{eq:fully_disc_implicit}
\tfrac{1}{{\Delta t_{n+1}}} M_h (\eta^{n+1} -\eta^{n}) = -(1-\theta) \big[F^n\circ \B (\eta^n)\big]  -\theta \big[F^{n+1}\circ \B (\eta^{n+1}) \big] \,.
\end{align}
\end{proposition}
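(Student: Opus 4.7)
The plan is to assemble the ingredients already derived earlier in the section into a single algebraic identity. First I would observe that, by the linearity of both sides of (\ref{eq:space_time_discretized}) in $\varphi$ and the fact that $\{\varphi_1,\dots,\varphi_\Kh\}$ is a basis of $V_{0,h}$, the variational statement is equivalent to the $\Kh$ scalar equations (\ref{eq:basis_i_fully_discr}) for $i=1,\dots,\Kh$. This reduces the task to inserting the closed-form evaluations of the terms $\mathbf{A}$, $\mathbf{B}$, $\mathbf{C}$.

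Next I would use the representation of $e^{n+1}-e^n$ over the $V_h$-basis and the mass-matrix computation to write term $\mathbf{A}$ as $\tfrac{1}{\Delta t_{n+1}}\,\tilde M_h^i(\tilde\eta^{n+1}-\tilde\eta^n)$; the key observation here is that the first column of $\tilde M_h$ vanishes, so the boundary entries $\tilde\eta_0^{n+1}$ and $\tilde\eta_0^{n}$ drop out and the expression reduces to $\tfrac{1}{\Delta t_{n+1}}\,M_h^i(\eta^{n+1}-\eta^n)$ with $M_h\in\mathbb R^{\Kh\times\Kh}$ and $\eta^n\in\mathbb R^\Kh$ as defined above. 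For terms $\mathbf{B}$ and $\mathbf{C}$, the element-wise computation (\ref{eq:sol_to_disc_secScal}) together with the definitions (\ref{Def:Q}) and (\ref{eq:alg_F_def}) identifies them with $F_i(\gamma^n)$ and $F_i(\gamma^{n+1})$, respectively. Stacking the $\Kh$ scalar identities into one vector equation yields
\begin{align*}
\tfrac{1}{\Delta t_{n+1}}\,M_h(\eta^{n+1}-\eta^n) \;=\; -(1-\theta)\,F(\gamma^n)\;-\;\theta\,F(\gamma^{n+1}).
\end{align*}

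Finally I would incorporate the side conditions (\ref{eq:fully_discr_beta}). Condition (i) fixes $\gamma_0^{n+1}=s^{n+1}$, and condition (ii), via the nodal characterization of $\Pi_h$, is equivalent to $\gamma_j^{n+1}=\beta(\eta_j^{n+1},x_j)$ for $j=1,\dots,\Kh$; jointly these say $\gamma^{n+1}=[s^{n+1};\,\B(\eta^{n+1})]$, with the analogous identity holding for $\gamma^n$ by the data of the problem. Inserting these into the vector equation and invoking the definition (\ref{eq:alg_F^n}) to rewrite $F(\gamma^{n+1})=F^{n+1}\circ\B(\eta^{n+1})$ and $F(\gamma^n)=F^n\circ\B(\eta^n)$ produces exactly (\ref{eq:fully_disc_implicit}).

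There is no genuine difficulty in this proof; it is essentially a repackaging of the term-by-term calculations already carried out. The only bookkeeping step that deserves explicit mention is the reduction from $\mathbb R^{\Kh+1}$ to $\mathbb R^{\Kh}$, i.e.\ the argument that the Dirichlet datum at $x_0$ has been absorbed into $F^n$ and that the zero first column of $\tilde M_h$ makes $\tilde\eta_0^{n+1}$ irrelevant for the update. With that checked, the proposition follows by direct substitution.
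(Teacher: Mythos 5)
Your proposal is correct and follows essentially the same route as the paper: the paper's ``proof'' of this proposition is precisely the term-by-term computation of $\mathbf{A}$, $\mathbf{B}$, $\mathbf{C}$ carried out in the surrounding text, followed by stacking the $\Kh$ scalar equations and substituting $\gamma^{n+1}=[s^{n+1};\,\B(\eta^{n+1})]$ via the nodal characterization of $\Pi_h$ and the definition of $F^n$. Your explicit remark about the vanishing first column of $\tilde M_h$ absorbing the Dirichlet datum matches the paper's own bookkeeping, so nothing is missing.
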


\vspace{5pt}
\begin{remark} \label{expl:alg}
In case of $\theta = 0$ an explicit solution of the \textit{Fully Discrete Problem} is given by the time stepping algorithm
\begin{equation*}
\begin{aligned}
    \eta^{n+1}   &=\eta^n - \Delta t_{n+1} M_h^{-1}  F(\gamma^n)  \, \\
    \gamma^{n+1} &=  \big[s^{n+1};~ \B(\eta^{n+1})\big] \, ,
\end{aligned}
\end{equation*}
where semicolon is MATLAB notation to stack two column vectors on top of each other.
\end{remark}

\section{Solving the nonlinear system} \label{sec:Solving the implicit system} Since the system \eqref{eq:fully_disc_implicit} is nonlinear, no direct solution formula is available. In this section we employ theory on piecewise affine functions as well as matrix theory to prove existence and uniqueness of solutions and give a solution algorithm.

\subsection{Piecewise affine function theory}
Following Scholtes \cite{Scholtes2012} we define: 

\vspace{5pt}
\begin{definition}
    A continuous function $f\,:\, \mathbb R^n \longrightarrow \mathbb R^m$ is called \emph{piecewise affine} if there exists a finite set of functions $ f_i(x)=A^i\,x+b^i, ~ i=1,\dots,k$, such that $f(x)\in\{f_1(x),\dots,f_k(x)\}$ for every $x\in \mathbb R^n$. The set of pairs $(A^i,\,b^i), ~ i=1,\dots,k$ is called a collection of \emph{matrix vector pairs} corresponding to f.
\end{definition}
\vspace{5pt}

Our main result of this section is based on the below theorem of Fujisawa and Kuh \cite{Fujisawa1972}. Recall that a homeomorphism is a continuous bijective map with a continuous inverse. 

\vspace{5pt}
\begin{theorem} \label{th:pieceAffHomeo2}
    Let $f: \mathbb R^n \rightarrow \mathbb R^n$ be piecewise affine and $(A^1,\,b^1), \dots, (A^k,\,b^k)$ a collection of matrix vector pairs corresponding to $f$. We denote by $A_l^i$ the submatrix of $A^i$ composed of its first l columns and rows. If for each $l=1,\dots,n$ the determinants of
    \begin{align*}
    A_l^1, \dots , A_l^k
    \end{align*}
    do not vanish and have the same sign, then $f$ is a homeomorphism.
\end{theorem}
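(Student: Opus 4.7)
I would prove the theorem by induction on the dimension $n$, adapting the fibering strategy of Fujisawa and Kuh's original argument. The base case $n=1$ is elementary: each matrix-vector pair reduces to a scalar pair $(a^i, b^i)$, the hypothesis says all $a^i$ are nonzero and share a common sign, and a continuous scalar piecewise affine function whose finitely many slopes are all of that sign is strictly monotonic. Since the two unbounded pieces have nonzero slopes, $f$ diverges to $\pm\infty$ at $\pm\infty$, so $f$ is a monotonic bijection of $\mathbb R$ whose inverse is again piecewise affine and continuous.

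For the inductive step, assume the result in dimension $n-1$. For each fixed $\xi\in\mathbb R$, I would introduce the sliced map
\begin{align*}
g_\xi: \mathbb R^{n-1} \to \mathbb R^{n-1}, \qquad x' \mapsto \bigl(f_1(x',\xi), \dots, f_{n-1}(x',\xi)\bigr).
\end{align*}
On the intersection of piece $i$ with the slice $\{x_n=\xi\}$, $g_\xi$ is affine with linear part equal to the leading $(n-1)\times(n-1)$ submatrix $A_{n-1}^i$ of $A^i$. The collection $\{A_{n-1}^i\}$ inherits the hypothesis for $l=1,\dots,n-1$, so the inductive hypothesis gives that $g_\xi$ is a homeomorphism of $\mathbb R^{n-1}$ for every $\xi$.

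Given a target $y\in\mathbb R^n$, the preimage $f^{-1}(y)$ can therefore be parametrized by $\xi\mapsto\bigl(g_\xi^{-1}(y_1,\dots,y_{n-1}),\,\xi\bigr)$, a continuous piecewise affine curve in $\mathbb R^n$. Along this curve, the remaining scalar equation $f_n(x)=y_n$ reduces to a one-variable piecewise affine equation in $\xi$. A short Schur-complement computation shows that on the restriction of piece $i$ the slope of this scalar function equals $\det(A^i)/\det(A_{n-1}^i)$. The hypotheses for $l=n$ and $l=n-1$ force the numerator and denominator each to have constant sign across $i$, so the slopes are nonzero and share a common sign. Applying the base case to the resulting scalar problem in $\xi$ produces a unique $\xi$, hence a unique preimage of $y$. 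This establishes bijectivity, and continuity of the inverse follows because $f^{-1}$ is likewise piecewise affine and continuous.

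The main obstacle, in my view, is verifying that the fiber-wise reduction genuinely produces a continuous scalar piecewise affine function of $\xi$ with the stated slopes. Two subtleties require care. First, as $\xi$ varies, the tracking point $\bigl(g_\xi^{-1}(y_1,\dots,y_{n-1}),\,\xi\bigr)$ crosses the seams between pieces of $f$, and one must check that the parametrization is continuous through those crossings and that no piece is skipped in a pathological way. Second, the Cramer/Schur-type slope computation requires $A_{n-1}^i$ to be invertible, which is precisely guaranteed by the $l=n-1$ part of the hypothesis. Once those two points are settled, everything else reduces to bookkeeping.
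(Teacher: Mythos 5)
The paper does not prove Theorem \ref{th:pieceAffHomeo2} at all: it is quoted as a black-box result of Fujisawa and Kuh \cite{Fujisawa1972}, so there is no in-paper argument to compare against. What you have written is, in outline, a faithful reconstruction of the Fujisawa--Kuh induction-on-dimension proof, and the core computations are right: the restriction of $f$ to a slice $\{x_n=\xi\}$ selects among affine pieces whose linear parts are exactly the $A^i_{n-1}$, so the hypothesis for $l=1,\dots,n-1$ passes to $g_\xi$; and the Schur-complement identity $a^i_{nn}-v^T(A^i_{n-1})^{-1}u=\det(A^i)/\det(A^i_{n-1})$ gives scalar slopes that are nonzero and of common sign by the $l=n-1$ and $l=n$ hypotheses. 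The one point you flag but leave open --- continuity of $\xi\mapsto g_\xi^{-1}(y_1,\dots,y_{n-1})$ across seams --- is a genuine obligation, and the clean way to discharge it is to consider the auxiliary map $F(x',\xi):=\bigl(g_\xi(x'),\xi\bigr)$: it is a continuous piecewise affine bijection of $\mathbb R^n$ (bijective fiberwise by the inductive hypothesis, and the last coordinate is preserved), hence a homeomorphism by invariance of domain (or, avoiding Brouwer, because a continuous injective piecewise affine map with invertible pieces is proper and open); your tracking curve is then a coordinate projection of $F^{-1}(y_1,\dots,y_{n-1},\cdot)$ and is automatically continuous and piecewise affine, so the composite $\xi\mapsto f_n(x'(\xi),\xi)$ is a scalar piecewise affine function to which your base case applies. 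With that lemma inserted, and the final observation that a continuous piecewise affine bijection of $\mathbb R^n$ has a continuous (indeed piecewise affine) inverse, your argument is complete and is essentially the proof the cited source gives.
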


\vspace{5pt}
To apply the theorem the following result from Gantmacher and Klein \cite{Gantmacher2002} about tridiagonal matrices of the form
\begin{align} \label{eq:tridiagform}
T=\begin{bmatrix}
a_1 & b_1 &  &  & 0 \\
c_1 & a_2 & \ddots &  &  \\
 & c_2 & \ddots & b_{n-2} &  \\
 &  & \ddots & a_{n-1} & b_{n-1} \\
0 &  &  & c_{n-1} & a_n 
\end{bmatrix}   \,,
\end{align}
is needed.

\vspace{5pt}
\begin{theorem} \label{th:realTridiag2}
    Let $T$ be tridiagonal with the property 
    \begin{align*}
    b_i,\, c_i<0 \quad \text{ for all }i\in \{1,\dots,n-1\}
    \end{align*}
where $b_i$, $c_i$ are defined as in (\ref{eq:tridiagform}). Then, all eigenvalues of $T$ are real and simple. 
\end{theorem}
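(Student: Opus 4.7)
The plan is to reduce $T$ to a real symmetric tridiagonal matrix by a diagonal similarity transformation, then invoke the spectral theorem together with the classical ``unreduced tridiagonal'' argument for simplicity of eigenvalues.

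First I would observe that because $b_i<0$ and $c_i<0$, the product $b_i c_i$ is strictly positive, so the ratio $c_i/b_i$ is positive and admits a positive real square root. Define $d_1:=1$ and recursively
\begin{align*}
d_{i+1} := d_i \sqrt{c_i/b_i}, \qquad i=1,\dots,n-1,
\end{align*}
and set $D:=\operatorname{diag}(d_1,\dots,d_n)$. A direct entrywise computation shows that $S := D^{-1} T D$ is tridiagonal with the same diagonal entries $a_i$ as $T$, and with off-diagonal entries
\begin{align*}
S_{i,i+1} = b_i \frac{d_{i+1}}{d_i} = \sqrt{b_i c_i}, \qquad S_{i+1,i} = c_i \frac{d_i}{d_{i+1}} = \sqrt{b_i c_i},
\end{align*}
so $S$ is real symmetric. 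In particular, its off-diagonal entries $\sqrt{b_i c_i}$ are strictly positive, hence nonzero, so $S$ is an unreduced symmetric tridiagonal matrix.

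Since $T$ and $S$ are similar they share the same spectrum, so it suffices to prove that $S$ has real and simple eigenvalues. Reality is immediate from the spectral theorem for real symmetric matrices. For simplicity I would use the standard unreduced tridiagonal argument: given an eigenvalue $\lambda$ and an eigenvector $v\in \mathbb{R}^n$ satisfying $(S-\lambda I)v=0$, the $i$-th row of this system reads $S_{i,i-1} v_{i-1} + (a_i-\lambda) v_i + S_{i,i+1} v_{i+1}=0$. Since $S_{i,i+1}=\sqrt{b_i c_i}\neq 0$, this recurrence expresses $v_{i+1}$ uniquely in terms of $v_i$ and $v_{i-1}$, starting from $v_1$ (with $v_0:=0$ by convention using the first row). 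Consequently the eigenspace for $\lambda$ is at most one-dimensional, so the geometric, and by symmetry also the algebraic, multiplicity of $\lambda$ equals one.

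There is no real obstacle beyond bookkeeping. The only point requiring the sign assumption $b_i,c_i<0$ is the existence of the real square root $\sqrt{c_i/b_i}$ that makes the symmetrization $D$ real; the rest of the argument works for any tridiagonal matrix with $b_i c_i>0$ and nonzero off-diagonal entries, which is precisely the content of the result cited from Gantmacher and Krein.
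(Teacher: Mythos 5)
Your proof is correct, but note that the paper does not actually prove this statement: Theorem~\ref{th:realTridiag2} is imported verbatim from Gantmacher and Krein as an external result, so there is no in-paper argument to compare against --- you have supplied a proof where the authors only supply a citation. Your route is the standard one for such ``Jacobi-type'' matrices: a diagonal similarity $S=D^{-1}TD$ with $d_{i+1}=d_i\sqrt{c_i/b_i}$ symmetrizes $T$, the spectral theorem gives reality, and the unreduced three-term recurrence gives geometric (hence, by diagonalizability, algebraic) simplicity; your closing observation that only $b_ic_i>0$ is needed is exactly the hypothesis under which the cited classical result is usually stated. One small slip: since $b_i<0$, you have $b_i\sqrt{c_i/b_i}=-\sqrt{b_ic_i}$, so the symmetrized off-diagonal entries are $-\sqrt{b_ic_i}<0$, not $+\sqrt{b_ic_i}>0$ as you wrote. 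This is immaterial --- the argument only uses that $S_{i,i+1}=S_{i+1,i}$ and that these entries are nonzero --- but the claim of strict positivity as stated is false. (An alternative classical proof, closer to Gantmacher--Krein's own presentation, runs through the Sturm-sequence recursion for the characteristic polynomials of the leading principal submatrices and their strict interlacing; your symmetrization is shorter and entirely adequate here.)
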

\vspace{5pt}

In addition a simple consequence of the Gershgorin circle theorem will be employed:
\vspace{5pt}
\begin{corollary}  \label{cor:eigvalDiagdom2}
    Let $A\in \mathbb R^{n\times n}$ with $a_{ii}> 0$ for all $i\in \{1,\dots,n\}$ be strictly diagonally dominant. Then, for any eigenvalue $\lambda$ of A it holds that $\Re(\lambda)> 0$.
\end{corollary}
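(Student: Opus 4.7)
The plan is to invoke the Gershgorin circle theorem directly and then use the hypotheses (positive diagonal and strict diagonal dominance) to force every Gershgorin disk into the open right half-plane. Since every eigenvalue of $A$ lies in at least one Gershgorin disk, this immediately yields $\Re(\lambda) > 0$.

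More concretely, first I would recall that by the Gershgorin circle theorem every eigenvalue $\lambda$ of $A$ satisfies
\begin{align*}
|\lambda - a_{ii}| \leq R_i := \sum_{j \neq i} |a_{ij}|
\end{align*}
for some index $i \in \{1,\dots,n\}$. Strict diagonal dominance gives $a_{ii} > R_i$, and combined with the assumption $a_{ii} > 0$, the disk centered at $a_{ii}$ with radius $R_i$ lies entirely in the open right half-plane. Next, I would extract the real part by writing $\lambda = \Re(\lambda) + i\,\Im(\lambda)$ and noting that
\begin{align*}
|\lambda - a_{ii}|^2 = (\Re(\lambda) - a_{ii})^2 + \Im(\lambda)^2 \geq (\Re(\lambda) - a_{ii})^2,
\end{align*}
so $|\Re(\lambda) - a_{ii}| \leq R_i$, which gives $\Re(\lambda) \geq a_{ii} - R_i > 0$.

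There is no real obstacle here; the statement is essentially a one-line consequence of Gershgorin together with positivity of the diagonal. The only subtlety worth mentioning is that strict diagonal dominance is needed (rather than just dominance) in order to obtain the \emph{strict} inequality $\Re(\lambda) > 0$; with mere diagonal dominance the disks would be allowed to touch the imaginary axis.
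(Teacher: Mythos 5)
Your proof is correct and matches the paper's intent exactly: the paper states this corollary as ``a simple consequence of the Gershgorin circle theorem'' without writing out the details, and your argument (each eigenvalue lies in a disk centered at $a_{ii}>0$ of radius $R_i<a_{ii}$, hence $\Re(\lambda)\geq a_{ii}-R_i>0$) is precisely that consequence. No issues.
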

\vspace{5pt}

In combination Theorem \ref{th:realTridiag2} and Corollary \ref{cor:eigvalDiagdom2} yield the following result.
\vspace{5pt}
\begin{proposition} \label{prop:specialTriag}
Let $T$ be a tridiagonal matrix of form (\ref{eq:tridiagform}) such that the conditions of Corollary \ref{cor:eigvalDiagdom2} hold and for each column $i$ of $T$ all off-diagonal entries either jointly vanish or are jointly smaller than zero. Then all leading principle minors of $T$ are strictly positive.
\end{proposition}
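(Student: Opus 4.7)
The plan is to prove, by strong induction on the matrix size $n$, the slightly stronger statement that any tridiagonal $T$ satisfying the hypotheses of the proposition has $\det(T) > 0$. The conclusion about all leading principal minors will then follow once we note that every leading principal submatrix $T_l$ again satisfies those hypotheses: $T_l$ is tridiagonal with positive diagonal, strict diagonal dominance is only strengthened by discarding $b_l$ from the bottom row, and the column sign condition for the rightmost column of $T_l$ reduces to a single off-diagonal entry whose sign is permitted by the hypothesis on $T$.

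For the inductive step I would separate two cases. In the first, suppose every off-diagonal of $T$ is strictly negative, i.e.\ $b_i, c_i < 0$ for $i = 1, \dots, n-1$. Then Theorem \ref{th:realTridiag2} guarantees that the spectrum of $T$ is entirely real, while Corollary \ref{cor:eigvalDiagdom2}, applied to the strictly diagonally dominant matrix $T$ with positive diagonal, guarantees that every eigenvalue has strictly positive real part. Combining these, every eigenvalue of $T$ is a positive real number, so $\det(T) = \prod_i \lambda_i > 0$.

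In the second case, some column $j$ of $T$ has both (or, at the boundary, its single) off-diagonal entries equal to zero. Expanding the determinant along column $j$ yields $\det(T) = a_j \det(M_j)$, where $M_j$ is the minor obtained by deleting row $j$ and column $j$. Because $T$ is tridiagonal, no entry of $M_j$ connects the index sets $\{1,\dots,j-1\}$ and $\{j+1,\dots,n\}$, so $M_j$ is block diagonal with blocks equal to $T_{j-1}$ and $T' := T[\{j+1,\dots,n\},\{j+1,\dots,n\}]$. Both blocks are tridiagonal, strictly smaller than $T$, and inherit all the proposition's hypotheses. The essential point is that strict diagonal dominance of $T'$ in its top row survives precisely because the entry $c_j$, which belonged to the deleted column in that row, was assumed to vanish, so no mass is lost from the row-sum inequality (and analogously at the bottom row of $T_{j-1}$ via $b_{j-1} = 0$). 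By the inductive hypothesis, $\det(T_{j-1}) > 0$ and $\det(T') > 0$, and since $a_j > 0$ we conclude $\det(T) > 0$. The base case $n = 1$ is immediate from $\det(T_1) = a_1 > 0$, with the convention that the determinant of an empty block is $1$ to cover $j = 1$ or $j = n$.

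The principal obstacle I expect is the second case: although it is tempting to apply the induction directly to a leading principal submatrix of $T$, the natural block that appears after expansion along column $j$ is the middle/trailing principal submatrix $T'$, which is not a leading minor. Handling this cleanly forces the induction to be on matrix size rather than on the index $l$, and requires that the off-diagonals in column $j$ actually vanish rather than merely being non-positive, so that the diagonal-dominance and column-sign hypotheses pass to $T'$ without deterioration.
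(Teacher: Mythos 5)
Your proof is correct and follows essentially the same route as the paper: both reduce to the all-negative off-diagonal case by splitting at columns whose off-diagonal entries vanish, and then combine Theorem \ref{th:realTridiag2} with Corollary \ref{cor:eigvalDiagdom2} to conclude that all eigenvalues are real and positive. The only cosmetic difference is that you organize the splitting as an induction on matrix size using Laplace expansion of $\det(T)$ directly, whereas the paper applies the same expansion to the characteristic polynomial and recursively decomposes the spectrum; the substance is identical.
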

\begin{proof}
Let $\hat{T}$ be a submatrix of $T$ composed of its first $k$ columns and rows. We need to show that $\det(\hat{T})>0$. All conditions that we assumed to hold for $T$ also hold for $\hat{T}$. 

The first step of the prove is to reduce the problem to the case $c_i,b_i<0$ for all $i\in\{1,\dots,n\}$. Assume there exists an $i\in\{1,\dots,n\}$ such that the off-diagonal entries of the $i-th$ column vanish. Then, $\hat{T}$ has the form
\begin{figure}[ht]
    \centering
    \includegraphics[width=42mm,trim={0.2cm 1cm 1cm 0.2cm}, clip]{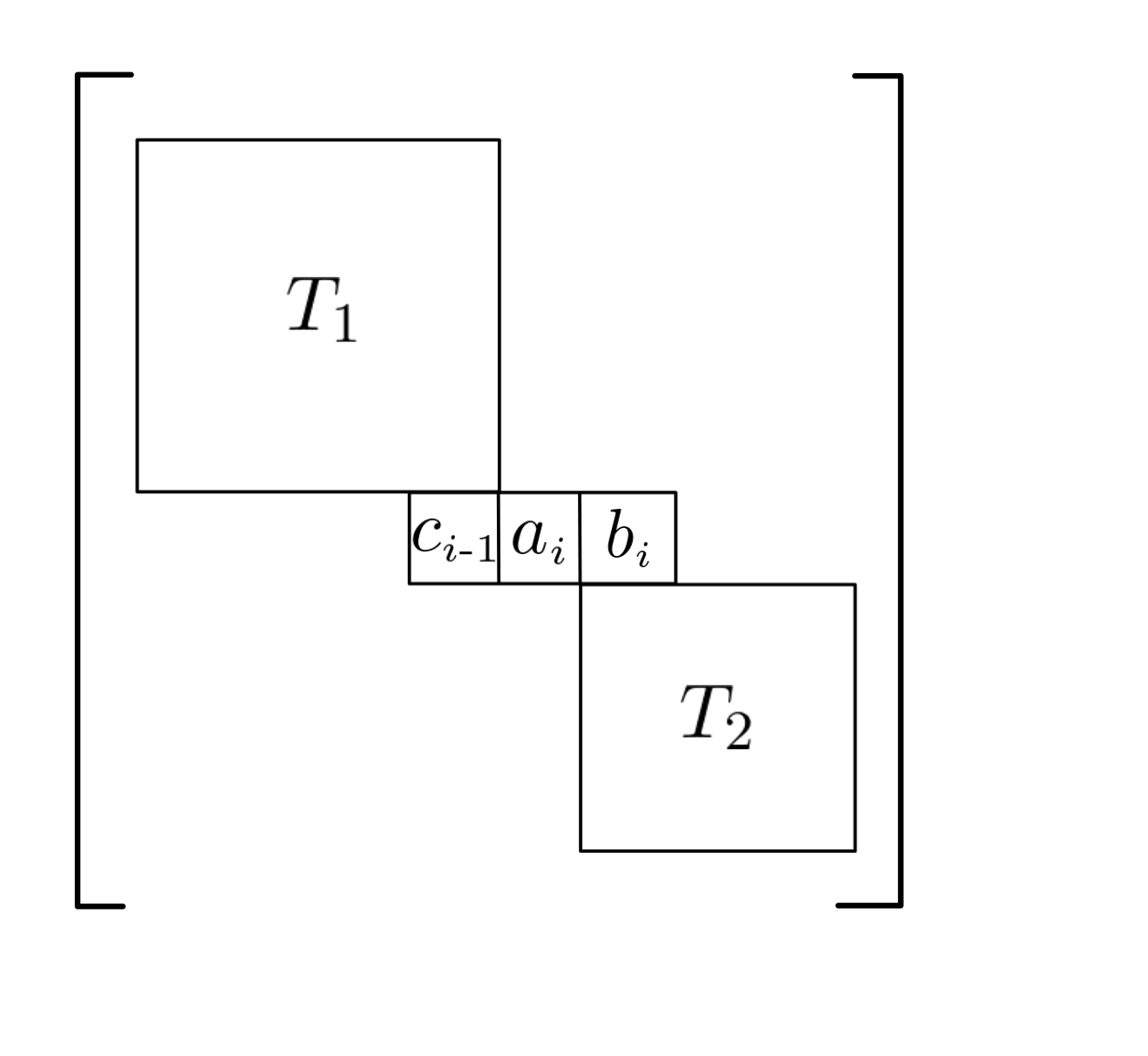} \, 
\end{figure}
\noindent where $T_1$ and $T_2$ are square tridiagonal matrices for which again the assumptions on $T$ hold. Recall that any eigenvalue of $T$ is a complex root of $\rho_{\hat{T}}(\lambda)=\det(\hat{T}-\lambda I)$. By the above form of $\hat{T}$, expanding this determinant at column $i$ with the Laplace expansion formula yields 
\begin{align*}
\rho_{\hat{T}}(\lambda) = \det(T_1-\lambda I) \det(T_2-\lambda I) (a_{ii}-\lambda)\, .
\end{align*}
Hence, the spectrum decomposition 
\begin{align*}
\lambda(\hat{T})=\lambda(T_1)\cup \lambda(T_2) \cup \{a_{ii}\}
\end{align*}
holds. Now we apply the same decomposition to $T_1$ and $T_2$ respectively, if columns with vanishing off-diagonal entries exist in those matrices and continue recursively.

It remains to show the case $c_i,b_i<0$ for all $i\in\{1,\dots,n\}$. Theorem \ref{th:realTridiag2} implies that all eigenvalues of $\hat{T}$ are real and Corollary \ref{cor:eigvalDiagdom2} further establishes strict positivity of these eigenvalues.
Thus, it holds that
\begin{align*}
\det(\hat{T})=\prod_{\lambda \in \lambda(\hat{T})} \lambda \, > 0.
\end{align*}
\end{proof}

\subsection{Homeomorphism property of the implicit system} 
Consider the function
\begin{equation}
\begin{aligned} \label{def:Phi}
    \Phi: \mathbb R^\Kh &\rightarrow \mathbb R^\Kh \\ 
         x &\mapsto \tfrac{1}{\Delta t_{n+1}} M_h \, (x - \eta^n) + \theta F^{n+1}\circ \B (x) + (1-\theta) \big[F^n\circ \B (\eta^n)\big] \, .
\end{aligned}
\end{equation}
Being a solution of (\ref{eq:fully_disc_implicit}) is equivalent being a root of $\Phi$. Thus, to apply the above theory we need to establish that 
\begin{enumerate}[(i)]
    \item the function $\Phi$ is piecewise affine and \label{enum: (i)}
    \item the corresponding matrices fulfill the conditions of Proposition \ref{prop:specialTriag}. \label{enum: (ii)}
\end{enumerate}
Then by Proposition \ref{prop:specialTriag} the assumptions of Theorem \ref{th:pieceAffHomeo2} are fulfilled which proves that $\Phi$ is a homeomorphism.

The continuity of $\Phi$ is given, since the the function $\beta(\eta, x_i)$ is continuous in $\eta$ and for the term $k_i(\beta(\eta,x_i))\,\beta(\eta,x_i)$ in \eqref{Manip:Q} the one-sided limits $\eta \downarrow 0$ and $\eta \uparrow 0$ at $\eta=0$ are both zero. The same holds for the one sided limits at $\eta = L_i$. 

To show the remaining part of (i) as well as (ii) we explore the nonlinear term $F^{n+1}\circ \B$ of \eqref{def:Phi}. For that we partition $\mathbb{R}^{\Kh}$ into box-shaped polyhedra as follows. Consider the map
\begin{equation}
    \begin{aligned} \label{eq:prop_dom_part1}
q_i: \mathbb{R} &\longrightarrow \{-1,\,0,\,1\},  \\
           \eta &\mapsto \begin{cases}
    -1 &\text{ for } \eta< 0 \\
    0  &\text{ for } 0\leq \eta \leq L_i \\
    1 &\text{ for } L_i < \eta \, ,
    \end{cases}
\end{aligned}
\end{equation}
with $i\in \{1, \dots , \kappa\}$. In addition, we define the set
\begin{align} \label{eq:prop_dom_part2}
\mathcal{Z} := \{-1,0,1\}^{\Kh}\,, 
\end{align}
and declare for each element $(z_1,\dots ,z_{\Kh})\in \mathcal{Z}$ the polyhedron
\begin{align} \label{eq:prop_dom_part3}
 P_z := \bigotimes_{i=1}^{\Kh} q_i^{-1}(z_i) \subseteq \mathbb R^\Kh \, .
\end{align}
For example, it holds that $P_{(1,0,1)}=(L_1,\infty)\times [0,L_2] \times (L_3,\infty)$  (see also Figure \ref{fig:ill_partition}). 
\begin{figure}
    \centering
    \includegraphics[width=70mm]{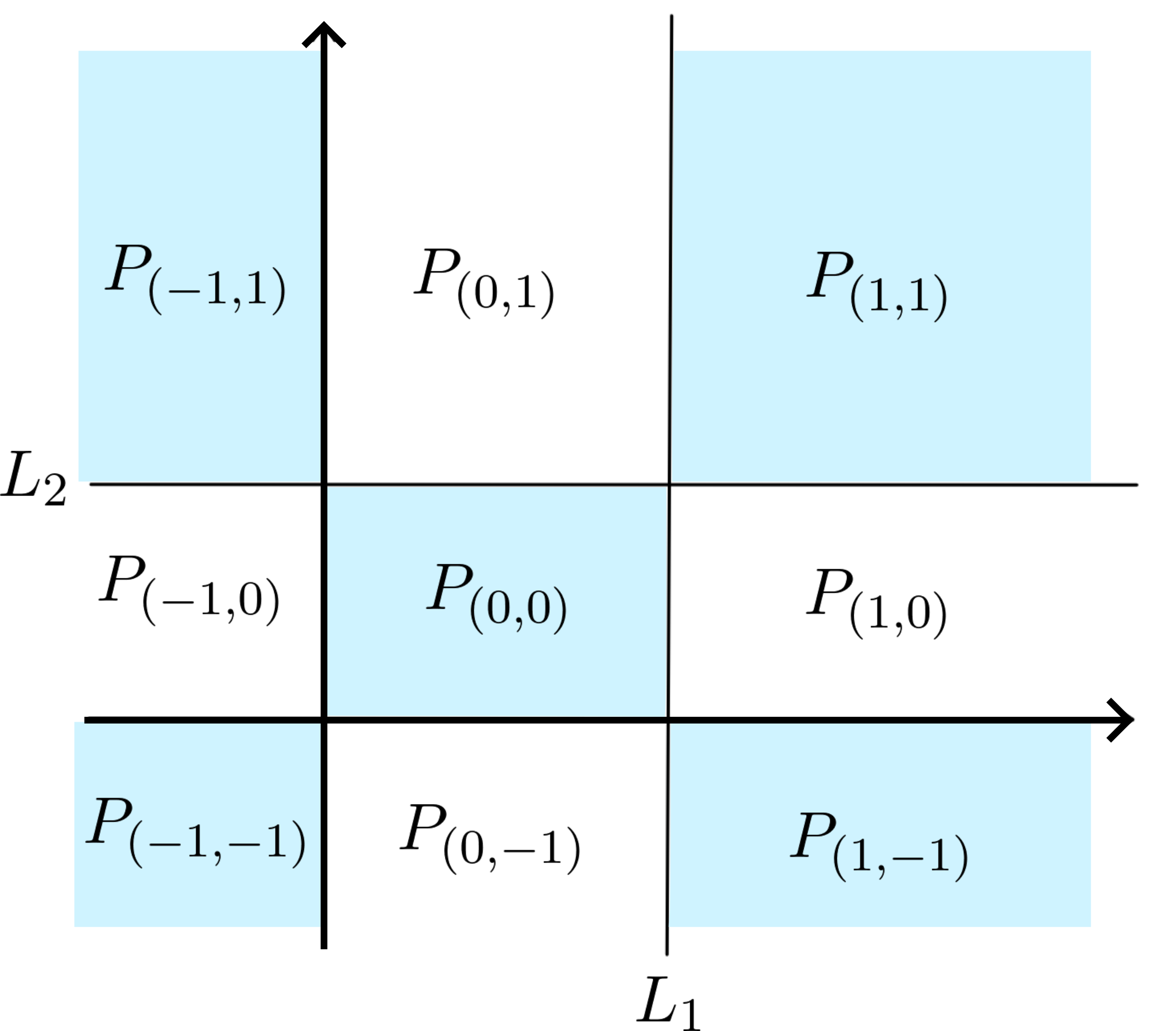}
    \caption{Illustration of the partitioning of $\mathbb R^2$, by (\ref{eq:prop_dom_part1}), (\ref{eq:prop_dom_part2}) and (\ref{eq:prop_dom_part3}).}
    \label{fig:ill_partition}
\end{figure}
It can easily be checked that $\big( P_z \big)_{z\in \mathcal{Z}}$ is a partition of $\mathbb R^\kappa$.

To simplify notation in the matrix definition \eqref{eq:def:A(z)} of the below theorem, instead of placing $z_i$ as a superscript on each variable of the $i-th$ column for all $i\in \{1,\dots,\kappa\}$, the common superscript is written above each column. In addition $\1_{x=y}$ denotes the indicator function $\1_A(x)$ on the set $A:=\{y\}$.

\vspace{5pt}
\begin{theorem} \label{th:pieceAffFnbeta}
For any $z\in\mathcal{Z}$ it holds that
\begin{align*}
F^n\circ \B(\eta)= A(z)\big[\eta - b(z)\big]+c^n \quad \quad \text{ for all } \eta \in P_z,
\end{align*}
where 
\begin{align} \label{eq:def:A(z)}
A(z):= 
\begin{bmatrix}
\colind{{\scriptstyle[r_1+r_2 ]}\,g_1}{z_1}&\colind{-r_2\,g_2}{z_2}&&\colind{\textcolor{white}{0}}{z_{\Kh-1}}& \colind{0}{z_\Kh} \\
-r_2       \, g_1  & {\scriptstyle[r_2+r_3 ]}\, g_2& \ddots &  &       \\
                   &    -r_3    \,g_2& \ddots & -r_{\Kh-1}\,g_{\Kh-1}  &        \\
                   &               & \ddots & {\scriptstyle[r_{\Kh-1}+r_\Kh ]}\, g_{\Kh-1}  & -r_\Kh \,g_{\Kh}  \\
0                  &               &        &-r_\Kh\, g_{\Kh-1} & {\scriptstyle r_\Kh}\,g_\Kh \\
\end{bmatrix}\, ,
\end{align} 
\begin{align*}
    r_i^{z}:=\frac{k_i(z)}{ h_i}, \quad g_i^{z}:=\frac{\1_{z=-1}}{c_i^f}+\frac{\1_{z=1}}{c_i^u},\quad \text{\normalfont for } i=1,\dots,\Kh \,, ~ z\in\{-1,\, 0,\, 1\}\, ,
\end{align*}
\begin{align*}
b(z):=  [\1_{z_1=1} L_1,\dots  ,\1_{z_\Kh=1} L_\Kh]^T
\end{align*}
and 
\begin{align*}
c^n:=[ r_1^{\sgn(s^n)} s^n, 0, \dots, 0 ]^T \, . 
\end{align*}
\end{theorem}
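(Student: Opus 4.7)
The plan is a direct case-analysis verification on each polyhedron $P_z$. Fix $z \in \mathcal{Z}$ and $\eta \in P_z$. By the definition of $P_z$ each $\eta_i$ lies in exactly the interval indicated by $z_i \in \{-1,0,1\}$, so the three cases of \eqref{eq:t(e)_acrossphase} can be compressed into the single formula $\beta(\eta_i,x_i) = g_i^{z_i}(\eta_i - b_i(z_i))$, where the mushy case $z_i=0$ is consistent because $g_i^0 = 0$ forces $\beta = 0$. Since $\gamma_i := \beta(\eta_i,x_i)$ inherits the sign dictated by $z_i$, the piecewise-constant conductivity $k_j$ evaluated at $\gamma_i$ equals $k_j^f$, $k_j^m$, or $k_j^u$ according to $z_i$, and multiplying by $\gamma_i$ yields the uniform expression $k_j(\gamma_i)\gamma_i = h_j\, r_j^{z_i}\, g_i^{z_i}\,(\eta_i - b_i(z_i))$ on $P_z$. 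The boundary value $\gamma_0 = s^n$ is treated identically and contributes $k_1(\gamma_0)\gamma_0 = h_1\, r_1^{\sgn(s^n)}\, s^n$.

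Substituting these products into the identity \eqref{Manip:Q} gives, for $j \geq 2$, $Q_j(\gamma) = r_j^{z_j} g_j^{z_j}(\eta_j - b_j(z_j)) - r_j^{z_{j-1}} g_{j-1}^{z_{j-1}}(\eta_{j-1} - b_{j-1}(z_{j-1}))$, while the $j=1$ expression has its $\eta_0$-slot replaced by the constant $r_1^{\sgn(s^n)} s^n$. Assembling $F_i$ via \eqref{eq:alg_F_def} and sorting contributions by their $\eta$-index produces the claimed tridiagonal pattern: for $1 < i < \kappa$ the $(i,i)$ diagonal collects $r_i^{z_i} g_i^{z_i}$ from $Q_i$ together with $r_{i+1}^{z_i} g_i^{z_i}$ from $-Q_{i+1}$, giving the combined factor $[r_i + r_{i+1}]^{z_i} g_i^{z_i}$; the sub- and super-diagonal entries pick up the remaining pieces of $Q_i$ and $-Q_{i+1}$, respectively; the first row drops the nonexistent subdiagonal entry and sheds the $s^n$-boundary constant into $c^n$; and the last row, fed only by $F_\kappa = Q_\kappa$, carries just its diagonal and subdiagonal terms.

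The main obstacle is combinatorial bookkeeping rather than analysis: one must verify carefully that the superscript attached to each $r_j$ sitting in column $i$ of $A(z)$ is $z_i$ and not $z_j$. This is the point at which the shared factor $g_i^{z_i}(\eta_i - b_i(z_i))$ is pulled out of both $Q$-terms that touch node $i$, and it is also what makes the mushy case $z_i = 0$ work: $g_i^0 = 0$ zeroes the entire $i$-th column of $A(z)$, matching the fact that $\gamma_i = 0$ eliminates every term that would otherwise depend on $\eta_i$. Once this indexing convention is pinned down, matching the tridiagonal entries of $A(z)$, the shift vector $b(z)$, and the constant $c^n$ against the statement is a line-by-line identification requiring no further analytic content.
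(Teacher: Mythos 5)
Your proposal is correct and follows essentially the same route as the paper: compress $\beta$ on $P_z$ into the single formula $g_i^{z_i}(\eta_i - b_i(z_i))$, use \eqref{Manip:Q} to write each $Q_j$ in terms of $k_j(\gamma_i)\gamma_i = h_j r_j^{z_i} g_i^{z_i}(\eta_i - b_i(z_i))$, assemble $F_i = Q_i - Q_{i+1}$ into the tridiagonal pattern, and absorb the boundary term into $c^n$. The paper merely organizes the same computation as an explicit matrix factorization ($F^n$ as a tridiagonal matrix acting on $\gamma$, composed with $\B$ as a diagonal affine map), while you substitute in one pass; the key indexing observation that the superscript on $r_j$ in column $i$ is $z_i$ (the paper's $k_i(\gamma_j)/h_i = r_i^{z_j}$) appears in both.
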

\begin{proof}
Let $z\in\mathcal{Z}$ and $\mathbf{\eta}\in P_z$. Additionally, we define $\gamma:=\B(\eta)$. 

We begin with computing a matrix form of $\B$. Let $i\in\{1, \dots, \kappa\}$. By $\eta\in P_z$ we have $q_i(\eta_i) = z_i$ and thus it holds that
\begin{align*}
z_i=1  \Leftrightarrow \eta_i>L_i \text{ and } z_i=-1 \Leftrightarrow \eta_i<0 \, .
\end{align*}
Recalling (\ref{eq:t(e)_acrossphase}) we obtain 
\begin{align*}
\B(\eta)_i &=\beta(\eta_i, \, x_i)= \1_{\eta_i<0}\, \frac{\eta_i}{c_i^f}+ \1_{\eta_i>L_i}\, \frac{\eta_i -L_i}{c_i^u}  \\
&=\1_{z_i=-1}\, \frac{\eta_i}{c_i^f}+ \1_{z_i=1}\, \frac{\eta_i -L_i}{c_i^u} =\1_{z_i=-1}\, \frac{\eta_i-  \1_{z_i=1} L_i}{c_i^f}+ \1_{z_i=1}\, \frac{\eta_i -  \1_{z_i=1} L_i}{c_i^u} \\
&= \left[ \frac{\1_{z_i=-1}}{c_i^f}+ \frac{\1_{z_i=1}}{c_i^u}\right] \big[ \eta_i-  \1_{z_i=1} L_i\big]  = g_i^{z_i} [\eta_i - b(z)_i] \,.
\end{align*}
It follows that
\begin{align*}
\B(\eta)
= \diag(g_1^{z_1}, \dots , g_\Kh^{z_\Kh}) \big[\eta - b(z)\big] \,.
\end{align*}

To achieve a matrix form of $F^n$ we recall (\ref{eq:alg_F^n}) and (\ref{eq:alg_F_def}) and calculate
\begin{align*}
    F^n(\gamma) &=  F([s^n;\, \gamma ]) = \begin{bmatrix}Q_1([s^n;\, \gamma ])- Q_2([s^n;\, \gamma ]))\\ \vdots \\ Q_{n-1}([s^n;\, \gamma ]) - Q_{\Kh}([s^n;\, \gamma ]) \\Q_{\Kh}([s^n;\, \gamma ]) \end{bmatrix} =
\overbrace{
\begin{bmatrix}
1 & -1 &  & 0 \\
 & \ddots & \ddots &  \\
 &  & \ddots & -1 \\
0 &  &  & 1 \\
\end{bmatrix}
}^{:=\mathbb{I}}
\begin{bmatrix}
    Q_1([s^n;\, \gamma ]) \\ \vdots \\Q_{\Kh}([s^n;\, \gamma ])
\end{bmatrix}\,.
\end{align*}
\vspace{6pt}
Recalling \eqref{Manip:Q} and noting that $\tfrac{k_i(\gamma_j)}{h_i} = r_i^{z_j}$ the expression reads
\begin{align*}
\mathbb{I}~ \times \,
\begin{bmatrix}
    \tfrac{k_1(\gamma_1)\gamma_1- k_1(s^n)s^n}{h_1} \\
    \tfrac{k_2(\gamma_2)\gamma_2- k_2(\gamma_1)\gamma_1}{h_2} \\
    \vdots  \\
    \tfrac{k_{\Kh}(\gamma_{\Kh})\gamma_\Kh- k_{\Kh}(\gamma_{\Kh-1})\gamma_{\Kh-1}}{h_\Kh}\\
\end{bmatrix} &=
\mathbb{I} ~ \times \,
\begin{bmatrix}
r_1^{z_1} &  &  & 0 \\
-r_2^{z_1}  & r_2^{z_2} &  &  \\
  & \ddots & \ddots &  \\
0 &  &  -r_\Kh^{z_{\Kh-1}} & r_\Kh^{z_\Kh} \\
\end{bmatrix}
\begin{bmatrix}
\gamma_1 \\
\gamma_2 \\ 
\vdots \\ 
\gamma_\Kh
\end{bmatrix}
- c^n \\[18pt] &=  
\begin{bmatrix}
\colind{\scriptstyle r_1+ r_2}{z^1} &\colind{-r_2}{z^2} &\colind{\textcolor{white}{0} }{ \cdots}  & \colind{0}{z^\Kh} \\
-r_2  & \scriptstyle r_2 + r_3 &\ddots  &  \\
  & \ddots & \ddots & -r_\Kh \\
0 &  &  -r_\Kh & \scriptstyle r_\Kh \\
\end{bmatrix}
\begin{bmatrix}
\gamma_1 \\
\gamma_2 \\ 
\vdots \\ 
\gamma_\Kh
\end{bmatrix}
- c^n \, 
\end{align*}

The result follows from plugging in $\gamma = \B(\eta)$.
\end{proof}
\vspace{5pt}

Since the other terms of \eqref{def:Phi} are linear it follows that $\Phi$ is piecewise-linear with corresponding matrices 
\begin{align*}
    J_z:=  \tfrac{1}{\Delta t_{n+1}} M_h + \theta A(z) \, ,
\end{align*}
and thus \eqref{enum: (i)} is established. To conclude the prove of this subsection it remains to show the property \eqref{enum: (ii)}. 

The off-diagonal entries of the i-th column of $J_z$ are $-\theta\, r_i^{z_i}g_i^{z_i}$, $-\theta\,r_{i+1}^{z_i}g_i^{z_i}$ or both. Hence, if $z_i\in\{-1,1\}$ by the assumption on the data (\ref{eq: ass_data_k}) and (\ref{eq: ass_data_stor}) these values are jointly negative. On the other hand if $z_i=0$ these entries are jointly zero. 

In addition, the diagonal of $J_z$ is
\begin{align*}
({J_z})_{ii}=(M_h)_{ii}/\Delta t_{n+1} + \theta\,r_i^{z_i}g_i^{z_i}+\theta\,r_{i+1}^{z_i}g_i^{z_i}\,.
\end{align*}
which is strictly positive and strictly larger than the modulus of the off diagonal entries. Thus, we have proven \eqref{enum: (ii)} and it follows the main result of this section.

\vspace{5pt}
\begin{theorem} \label{th:implUniqueSol}
The map $\Phi$ as defined in \eqref{def:Phi} is a homeomorphism. In particular (\ref{eq:fully_disc_implicit}) has a unique solution $\eta^{n+1}$.
\end{theorem}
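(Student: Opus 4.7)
The plan is to assemble the pieces already in place. Because Theorem \ref{th:pieceAffFnbeta} represents $F^{n+1}\circ \B$ as an affine map on each polyhedron $P_z$, and the remaining terms in \eqref{def:Phi} are linear in $x$, $\Phi$ is piecewise affine on the partition $(P_z)_{z\in \mathcal{Z}}$ with matrix-vector pairs whose matrices are exactly $J_z:=\tfrac{1}{\Delta t_{n+1}}M_h+\theta A(z)$. This establishes (i).

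Next I would verify that every $J_z$ meets the hypotheses of Proposition \ref{prop:specialTriag}. Tridiagonality is inherited from $A(z)$ together with the diagonal mass matrix $M_h$. The column-wise ``jointly zero or jointly negative'' property for the off-diagonals holds as already observed: when $z_i\in\{-1,1\}$ both off-diagonals in column $i$ are strictly negative, and when $z_i=0$ they both vanish. The diagonal entry $(J_z)_{ii}=(M_h)_{ii}/\Delta t_{n+1}+\theta\, r_i^{z_i}g_i^{z_i}+\theta\, r_{i+1}^{z_i}g_i^{z_i}$ is strictly positive, and since the two $\theta$-terms already equal the sum of the moduli of the off-diagonals in column $i$, the positive surplus $(M_h)_{ii}/\Delta t_{n+1}$ delivers strict diagonal dominance (by columns, which suffices for Corollary \ref{cor:eigvalDiagdom2} via Gershgorin applied to $J_z^T$). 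Invoking Proposition \ref{prop:specialTriag} then yields that every leading principal minor of every $J_z$ is strictly positive.

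Consequently, for each $l=1,\dots,\kappa$ the $l$-th leading principal minors of the matrices in the collection $\{J_z : z\in\mathcal{Z}\}$ are all nonzero and share a common (positive) sign, which is exactly the hypothesis of Theorem \ref{th:pieceAffHomeo2}. That theorem therefore applies and identifies $\Phi$ as a homeomorphism from $\mathbb{R}^\kappa$ to $\mathbb{R}^\kappa$. In particular $\Phi$ is a bijection, so $0\in\mathbb{R}^\kappa$ has a unique preimage, which is the unique $\eta^{n+1}$ solving \eqref{eq:fully_disc_implicit}. Since the structural groundwork has been laid in Theorem \ref{th:pieceAffFnbeta} and Proposition \ref{prop:specialTriag}, the only point that truly needs care is ensuring that the diagonal-dominance argument applies uniformly across all $3^\kappa$ sign patterns $z\in\mathcal Z$; this uniformity is guaranteed by the column-by-column structure of $J_z$ and thus poses no obstacle.
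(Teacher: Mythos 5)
Your argument follows the paper's own route exactly: Theorem \ref{th:pieceAffFnbeta} plus the linearity of the remaining terms of \eqref{def:Phi} gives the matrices $J_z$, Proposition \ref{prop:specialTriag} gives strict positivity of all leading principal minors uniformly over the finitely many $z\in\mathcal{Z}$, and Theorem \ref{th:pieceAffHomeo2} then yields the homeomorphism property and hence the unique root; your observation that the diagonal dominance is by columns and suffices for Corollary \ref{cor:eigvalDiagdom2} via Gershgorin applied to $J_z^T$ is correct and is in fact a point the paper leaves implicit. The one step you omit is the continuity of $\Phi$ across the boundaries of the polyhedra $P_z$: having an affine formula on each cell of a partition does not by itself make a function piecewise affine in the sense of the paper's definition, which demands continuity (otherwise a step function would qualify), and continuity is not automatic here because $k_i(u)$ jumps at $u=0$. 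It holds only because the term $k_i(\beta(\eta,x_i))\,\beta(\eta,x_i)$ appearing in \eqref{Manip:Q} has both one-sided limits equal to zero at $\eta=0$ and coinciding one-sided limits at $\eta=L_i$, i.e.\ $\beta$ vanishes precisely where $k_i\circ\beta$ is discontinuous; the paper verifies this explicitly before invoking the piecewise affine machinery. With that check added, your proof is complete and structurally identical to the paper's.
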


\subsection{Katzenelson algorithm for the implicit system}
We apply the Katzenelson Algorithm \cite{Katzenelson1965} to find a root of $\Phi$. Fujisawa and Kuh \cite{Fujisawa1972} prove that this algorithm finds the exact root of a function in a finite number of steps from any initial guess, provided that the function is a piecewise affine homeomorphism. By Theorem \ref{th:implUniqueSol} the latter is the case for $\Phi$ as defined in \eqref{def:Phi} and hence the Katzenelson algorithm is well suited for solving the implicit system. The authors also provide a cost efficient variant of the algorithm that we use. The idea of their method and prove is provided here for the readers convenience.

\vspace{5pt}
Take an initial guess $x^{(0)}$ in the interior of  $
P_{z^{(0)}}$, where $z^{(0)} \in \mathcal{Z}$. Calculate the Newton increment 
\begin{align*}
v^{(0)} =- J_{z^{(0)}}^{-1} \Phi(x^{(0)}).
\end{align*}
If $x^{(0)}+v^{(0)} \in P_{z^{(0)}}$, then 
\begin{align*}
\Phi(x^{(0)}+v^{(0)}) = \Phi(x^{(0)})+ J_{z^{(0)}} v^{(0)} = \Phi(x^{(0)}) - J_{z^{(0)}} J_{z^{(0)}}^{-1}\Phi(x^{(0)}) = 0
\end{align*}
so the solution is found. Otherwise, let 
\begin{align*}
\lambda^{(0)} := \max(\{\lambda >0 | x^{(0)}+\lambda v \in \overline{P_{z^{(0)}}}  \})
\end{align*}
and set 
\begin{align*}
x^{(1)} :=x^{(0)}+\lambda^{(0)} v^{(0)}\, .
\end{align*}
Then $x^{(1)}$ lies on the boundary of at least one other polygon $P_{z^{(1)}}$. Assume for now that no polygon corner is hit so the adjacent polygon is unique.

Next calculate the new Newton increment based on the Jacobian valid on $P_{z^{(1)}}$. That is, $v^{(1)} =- J_{z^{(1)}}^{-1} \Phi(x^{(1)})$. If $x^{(1)}+v^{(1)} \in P_{z^{(1)}}$, then the solution is again found. Otherwise, continue with choosing $\lambda^{(1)}$ as described above.

\begin{figure}[ht]
    \centering
    \includegraphics[width=0.3\linewidth]{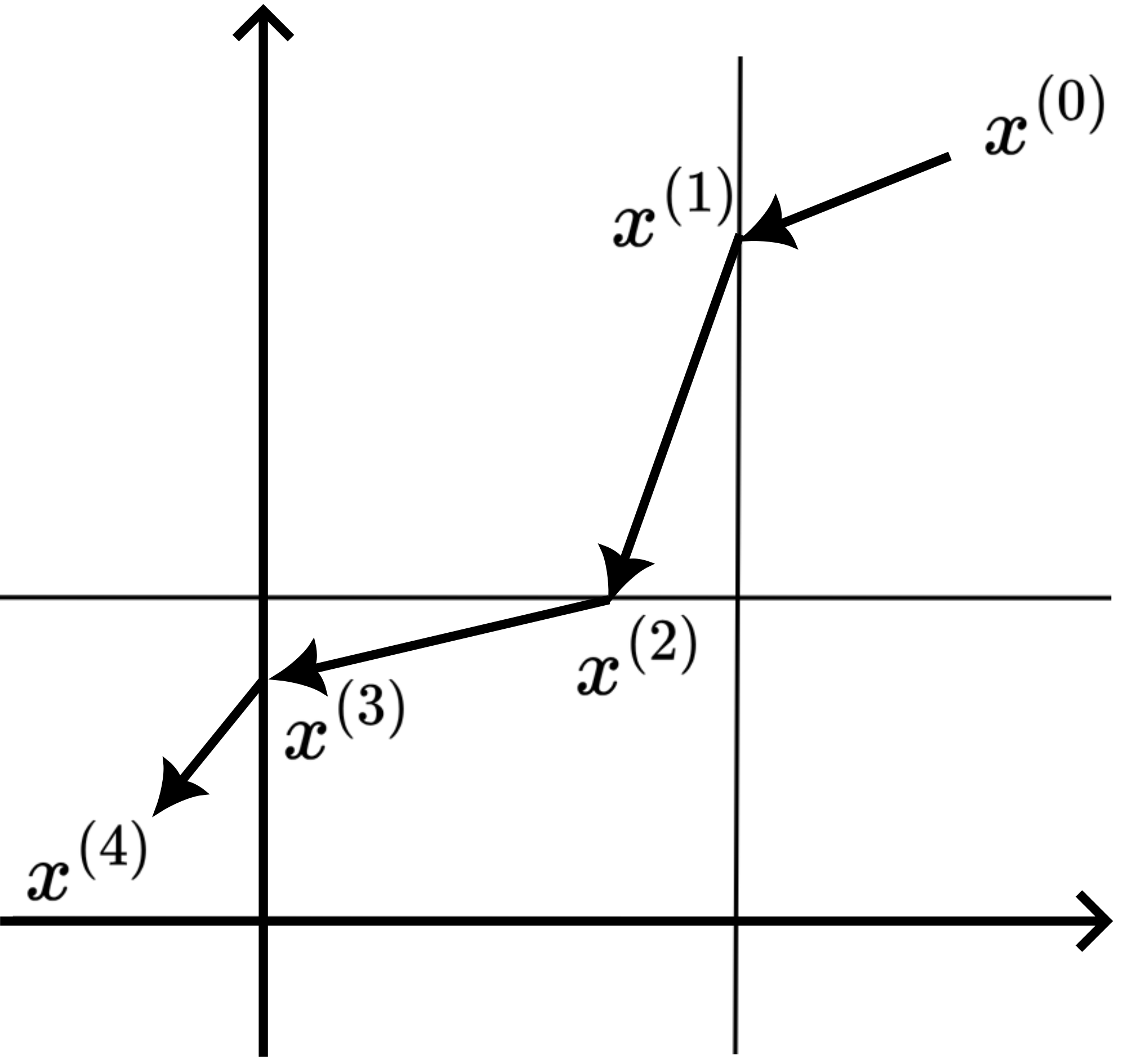}
    \caption{Example iterations of the Katzenelson algorithm. See also Figure \ref{fig:ill_partition}.}
\end{figure}

To see why this algorithm converges in a finite number of steps consider $L\subset \mathbb{R}^\kappa$ the line segment joining $\Phi(x^{(0)})\in \mathbb{R}^\kappa$ and $0\in \mathbb{R}^\kappa$. Since $\Phi$ is a homeomorphism the preimage $\Phi^{-1}(L)$ is a polygonal chain called "solution curve" in $\mathbb{R}^\kappa$ with vertices lying on the the boundaries of the $P_z$. It is easy to see that the above algorithm traces the solution curve exactly if no corner is hit.

If a corner is hit, the task is to determine an adjacent polyhedron to which the solution curve can be extended. Following Fujisawa and Kuh, we then apply a small enough perturbation to the solution curve to avoid hitting the corner and accomplish finding an extension. 

See Appendix \ref{App2} for the pseudo-code of the algorithm adapted to finding a root of $\Phi$ as defined in \eqref{def:Phi}. 

\section{Results} \label{sec: results}
The accuracy of the backward Euler method ($\theta = 1$), the Crank-Nicolson scheme ($\theta = \frac{1}{2}$), and the forward Euler method ($\theta = 0$) was evaluated by comparing numerical results with the Neumann analytical solution \cite{tarzia2015}. The Neumann solution is valid for the Stefan problem on a semi-infinite, homogeneous, one-dimensional domain with a Dirichlet boundary condition. The physical parameters were selected to be representative of typical conditions in permafrost soils. For all three numerical schemes, the computed results closely matched the analytical solution, provided that a sufficiently refined spatial mesh and a suitably small time step were used.

The results of the implicit method for increasing spatial resolution are presented in Figure \ref{fig:convergence}. Additionally, Figure \ref{fig:error} depicts the convergence of the numerical error toward zero as the mesh is refined.

\begin{figure}[ht]
        \centering
        \includegraphics[width=1\linewidth]{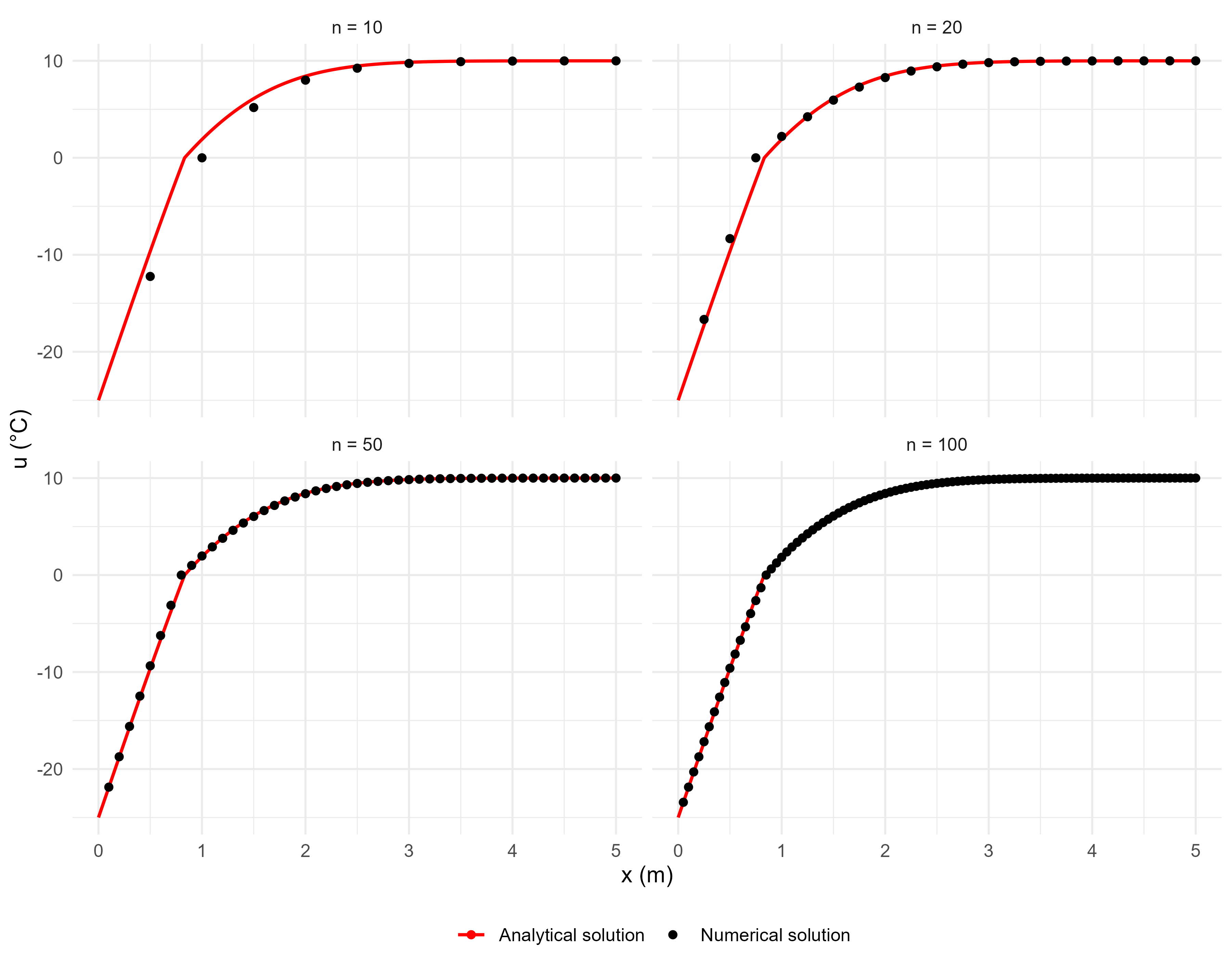}
        \caption{Convergence of numerical results to analytical temperature distribution at day 15 with increasing resolution. Temperature is denoted by $u$.}
        \label{fig:convergence}
\end{figure}

\begin{figure}[ht]
    \centering
    \includegraphics[width=0.55\linewidth]{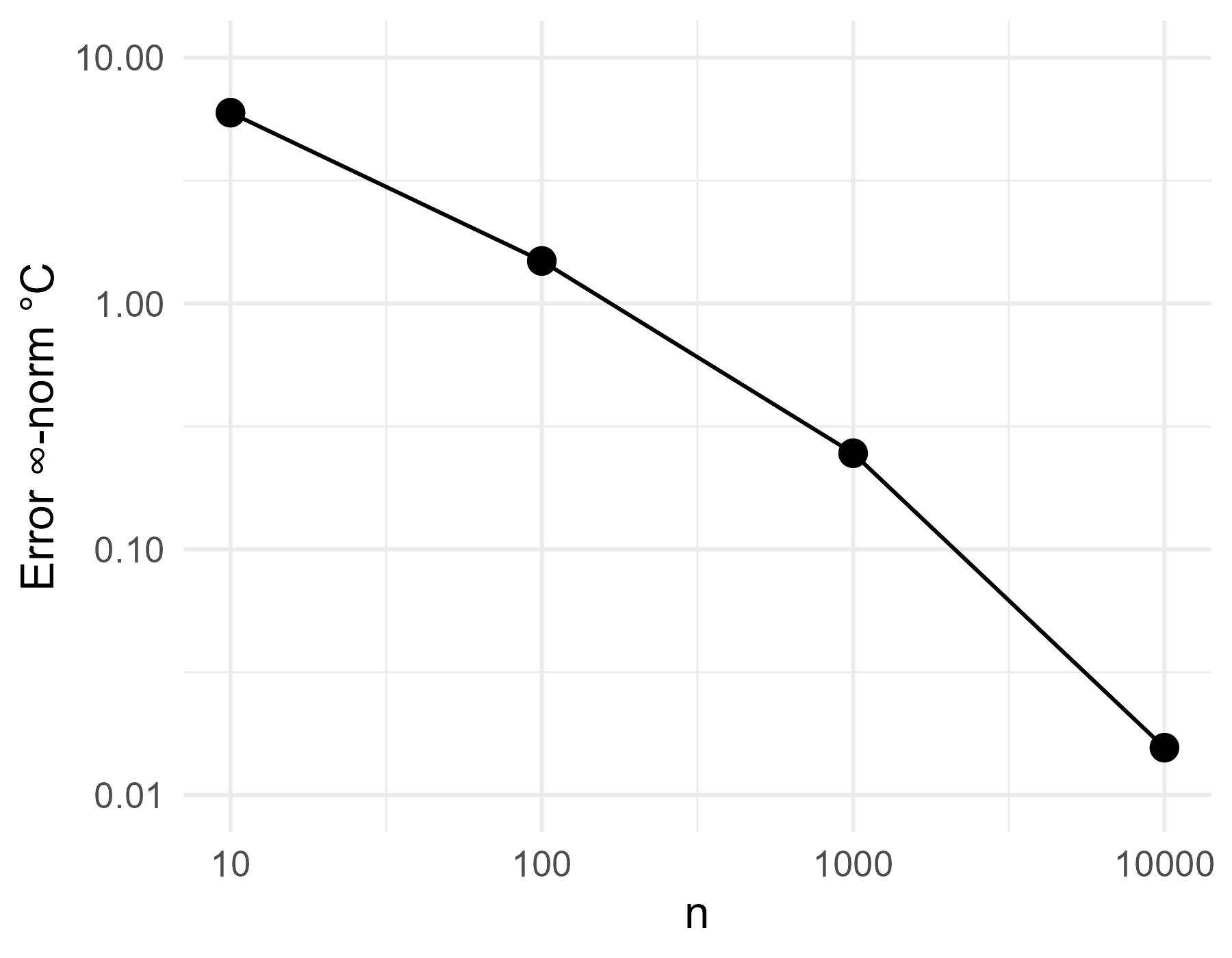}
    \caption{Maximum error in temperature distribution for all 20 days with increasing resolution.}
    \label{fig:error}
\end{figure}

The backward Euler method ($\theta = 1$) and the Crank–Nicolson scheme ($\theta = 0.5$), in combination with the Katzenelson solver, were implemented in LPJmL 5 \cite{Bloh2018} to assess their robustness and stability. A 13-meter-deep soil column was discretized for each of the more than $60,000$ grid cells using an unequally spaced grid that becomes progressively coarser towards the bottom ($n = 24$). The model uses a daily time step. As climate forcing the GSWP3-W5E5 dataset \cite{Lange2022Isimip} was used. The lower air temperature of this data served as the Dirichlet boundary condition. The thermal influence of a snow cover within a grid cell was accounted for in a simplified manner by dynamically increasing the thickness and decreasing the thermal conductivity of the top layer in the presence of snow. To optimize computational efficiency, the Crank–Nicolson scheme for the standard heat equation was applied whenever all temperatures in the soil column as well as the boundary condition had the same sign.

As expected, the Katzenelson method successfully converged for all grid cells throughout the entire simulation period of over 4000 years, for both the backward Euler and Crank–Nicolson schemes. Figure \ref{fig:soiltemp_maps} presents the resulting deep soil temperatures obtained using the backward Euler method, in comparison with observational data.

Since multiple iterations may be required to find the root of $\Phi$, the method is computationally more expensive than solving a linear heat equation. However, in the global LPJmL simulation, a single iteration was sufficient in most cases, indicating that no node changed state during the time step (see Figure \ref{fig:lin_sys_per_timestep} for the backward Euler method). During the first transient simulation year, an average of 1.48 and 1.93 linear system solutions per time step were required for the backward Euler and Crank–Nicolson schemes, respectively.

\begin{figure}[ht]
    \centering
    \includegraphics[width=1\linewidth]{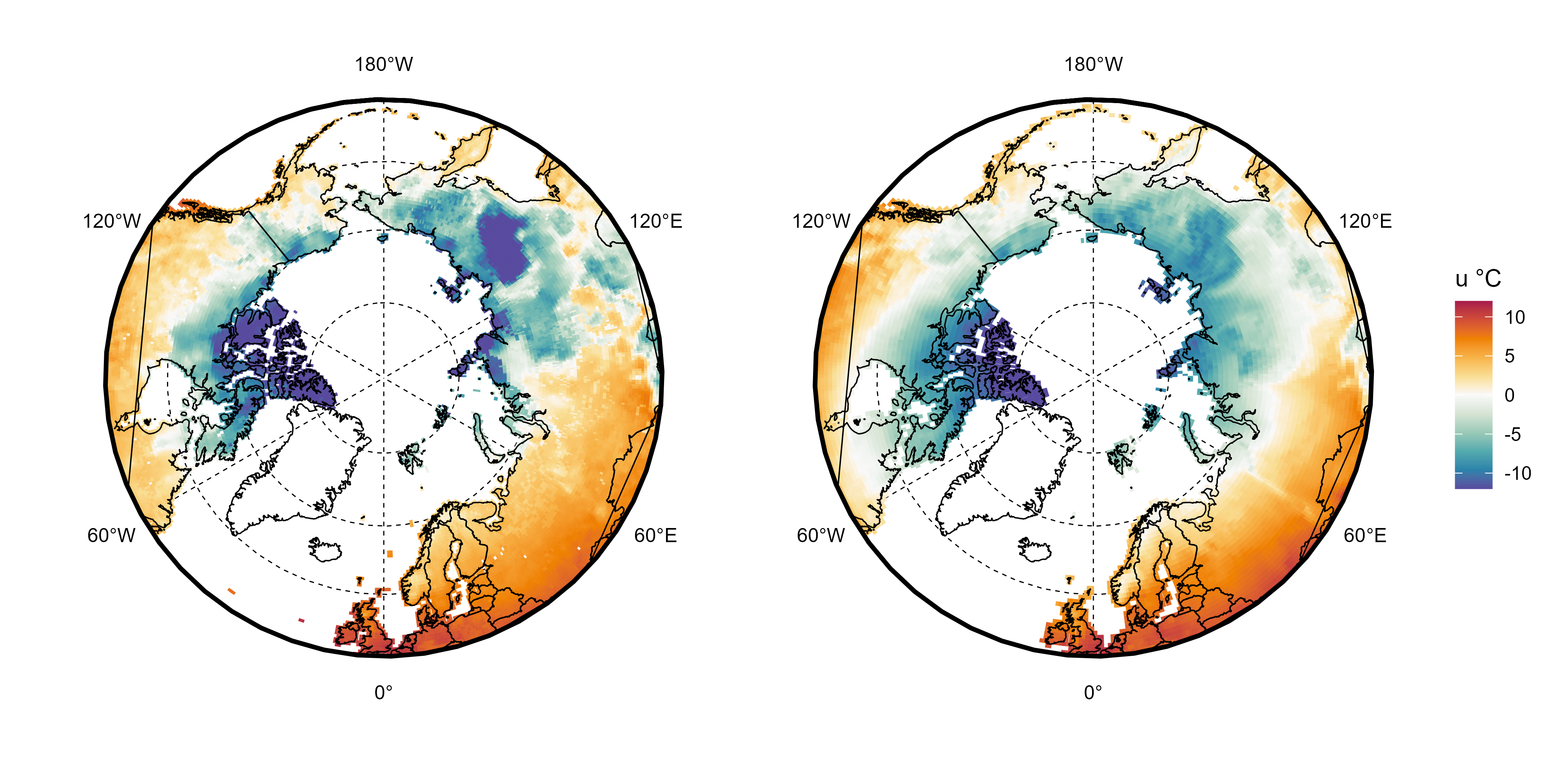}
    \caption{Mean soil temperature $u$ at the smallest soil depth that has practically zero fluctuations in ground temperature during the year (depth of zero annual amplitude) 2000-2014 of LPJmL (left) compared to statistically interpolated observations \cite{Aalto2018} (right).}
    \label{fig:soiltemp_maps}
\end{figure}

\begin{figure}[ht]
    \centering
    \includegraphics[width=0.55\linewidth]{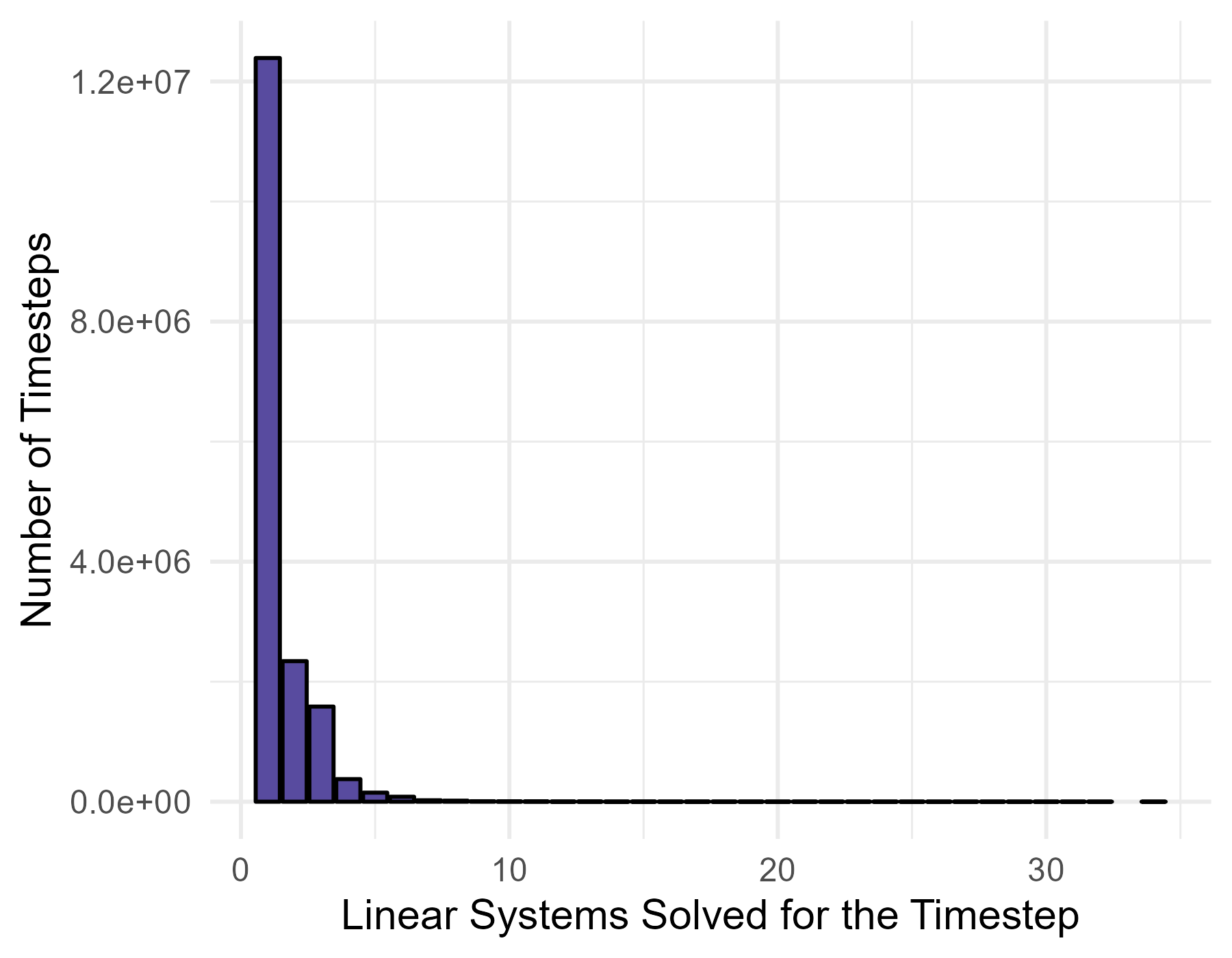}
    \caption{Number of timesteps (y-axis) that needed $1,2, 3,\dots,35$ linear systems solutions (x-axis) to carry out the step. The first year of the LPJmL simulation using the backward Euler method is shown.}
    \label{fig:lin_sys_per_timestep}
\end{figure}

\section{Discussion} \label{sec:discussion}

It is important to recall that the most commonly used implicit DECP method for soil heat transfer in LSMs and DGVMs neglects the nonlinearity in the partial differential equation during the heat diffusion time step. It only corrects for the effects of phase changes after the diffusion time step has been fully carried out. This approach has several implications when compared to the method proposed in this study.

First, the DECP method requires solving only a single linear system per time step, making it computationally more efficient than methods that handle a nonlinear system for time stepping. However, by leveraging the piecewise-linear structure of the system, the method proposed in this study typically requires solving only about $1.5$ to $2$ linear systems per time step, which remains computationally feasible for global-scale models. While the backward Euler method required fewer iterations per time step than the Crank–Nicolson scheme, the latter's higher spatial order of accuracy may still well justify its use.

Second, while the implicit DECP method is computationally more efficient, it is known to introduce an artificial stretch of the freezing region, leading to significant errors \cite{Nicolskiy2007}. This effect was reproduced in the present study (see Figure \ref{fig:DECP}). Over the full 20-day simulation period, the mean absolute difference between the analytical and numerical solutions was 0.443 for the DECP method, whereas the proposed method achieved a lower error of 0.170. Furthermore, it was observed that the explicit DECP scheme does not exhibit this nonphysical behavior and that the discrepancy increases with larger time steps.

\begin{figure}[ht]
    \centering
    \includegraphics[width=1\linewidth]{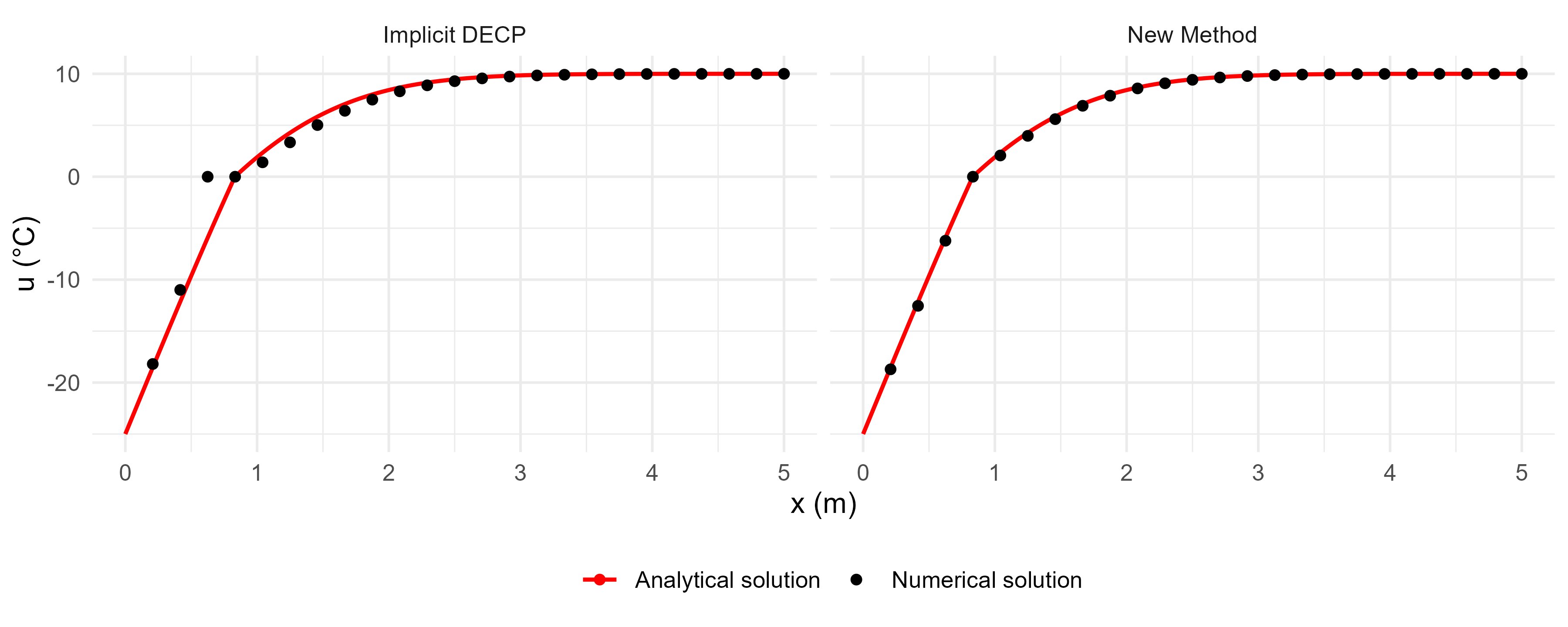}
    \caption{Implicit DECP method and new method proposed here plotted against analytical solutions at day 15. Temperature is denoted by $u$. Both methods used $\theta = \tfrac{1}{2}$. The time step was set to a day.}
    \label{fig:DECP}
\end{figure}

The errors in the implicit DECP method can be attributed to the fact that, in implicit schemes, energy fluxes are computed based on the temperature distribution of the next time step. Since the phase change parametrization is applied separately, the predicted temperature distribution is not yet adjusted for phase changes, leading to inaccuracies. This explanation is supported by the observation that the explicit DECP scheme does not exhibit this issue, as the fluxes in this approach are derived from the current temperature distribution, which has already been adjusted for phase changes.

A potential approach to enhance the computational performance of the implicit scheme developed in this study is to use a predictor-corrector method. In this approach, an explicit time step could be used to generate the initial guess for the Katzenelson algorithm. It is plausible that this initial guess would most of the times already lie within the same polyhedral region as the solution, thereby reducing the number of iterations needed to find the solution to one in those cases.

\section{Conclusion}
We proposed a numerical solver for one-dimensional heat conduction with phase changes targeted at its use in global DGVMs and LSMs. The solver is implicit, allowing it to accommodate large time steps while maintaining stability. The Katzenelson solution algorithm for the nonlinear timestepping system is proven to find the exact solution in a finite number of iterations. Comprehensive tests in the LPJmL model demonstrated that this robustness also holds in practice and that the computational costs remain comparable to those of the "industry standard" implicit DECP method; i.e., within a factor of two. 

However, the implicit method proposed in this study has the advantage of being consistently derived from fundamental physical principles, without erroneously assuming linearity. Additionally, it avoids computing fluxes based on a temperature distribution that has not yet been adjusted for phase changes, a limitation of the implicit DECP method that leads to larger errors and an artificial extension of the freezing region. In summary, the proposed method is better physically grounded and less error-prone while remaining computationally feasible for application in DGVMs and global LSMs.

\section*{Acknowledgments}
We thank the LPJmL developers Kirsten Thonicke, Werner von Bloh and Sibyll Schaphoff at the Potsdam Institute for Climate Impact Research (PIK) for support with implementing the newly developed method in LPJmL.

\appendix
\section{Derivation of (\ref{Manip:Q})}
\label{App1}
The expression 
\begin{align} \label{eq:def_algheatflux}
\frac{\gamma_{i}^n-\gamma_{i-1}^n }{h_i} \, \overline{k_{i}(u^n)}^i\,,
\end{align}
 in (\ref{Def:Q}) resembles the approximation to the \emph{negative} heat flux, as defined by Fourier's law.
It can be explored by computing
\begin{align}\label{eq:sol_to_discr_weight}
     \overline{k_i(u^n)}^i= k_i(\gamma_{i}^n) \, \omega + k_i(\gamma_{i-1}^n) \, (1-\omega) \, , 
\end{align}
where 
\begin{align*}
\omega:=\frac{\gamma_{i}}{\gamma_i-\gamma_{i-1}}
\end{align*}
and the superscript $n$ is momentarily omitted. To see this, consider first the case  $\sgn (\gamma_i)=\sgn(\gamma_{i-1})$. The sign of $u$ is then the same across $T_i$, hence according to (\ref{eq:def_ki}) we have $\overline{k_i(u^n)}^i= k_i(\gamma_{i}^n)=k_i(\gamma_{i-1}^n)$ and the claim follows for all $\omega$. For the case $\sgn(\gamma_i)\neq \sgn(\gamma_{i-1})$ a simple geometric consideration shows that the proportion $v$ of the interval $[x_{i-1}, x_i]$ where $\sgn(u^n)=\sgn(\gamma_i)$ equals the ratio of $\gamma_{i}$ and $\gamma_i-\gamma_{i-1}$ (see Figure \ref{fig:ill_w}): 
\begin{align*}
\frac{v}{h_i}=\frac{\gamma_{i}}{\gamma_i-\gamma_{i-1}} \, .
\end{align*}

\begin{figure}[ht]
    \centering
    \includegraphics[width=75mm]{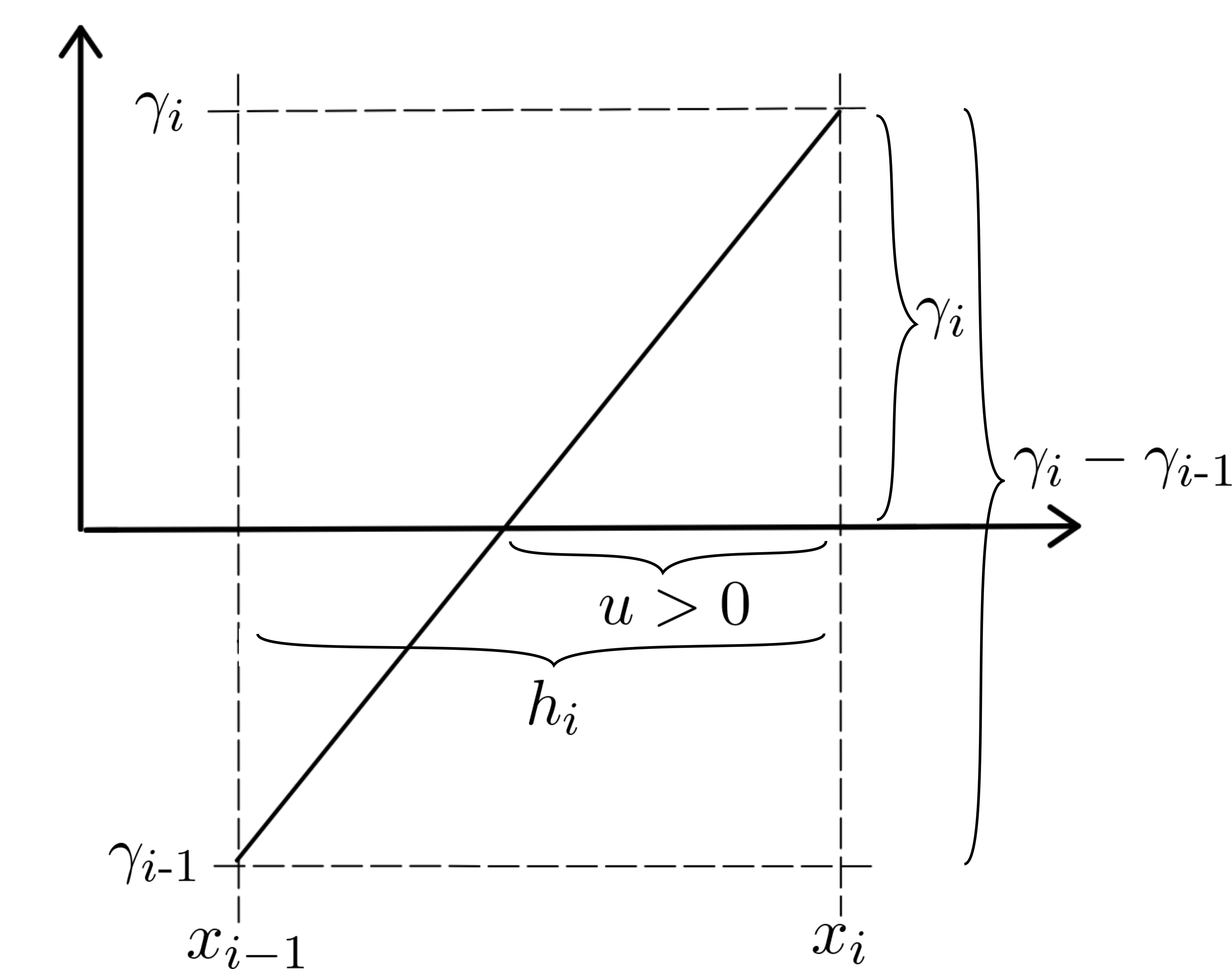}
    \caption{Illustration of the function $u$ over $T_i$, when a sign change occurs.}
    \label{fig:ill_w}
\end{figure}

Hence 
\begin{align*}
\overline{k_i(u^n)}^i \cdot h_i =  k_i(\gamma_{i}^n) \cdot v + k_i(\gamma_{i-1}^n)\cdot (h_i-v)
\end{align*}
and a division by $h_i$ shows (\ref{eq:sol_to_discr_weight}).
Plugging (\ref{eq:sol_to_discr_weight}) into (\ref{eq:def_algheatflux}) we obtain 
\begin{align*}
    &\frac{\gamma_{i}^n-\gamma_{i-1}^n }{h_i} \left[ k_i(\gamma_{i}^n)\frac{\gamma_{i}^n}{\gamma_i^n-\gamma_{i-1}^n} + k_i(\gamma_{i-1}^n)\left( 1-\frac{\gamma_{i}^n}{\gamma_i^n-\gamma_{i-1}^n}) \right) \right]\\
    & = \frac{ k_i(\gamma_{i}^n)\gamma_{i}^n}{h_i} +\frac{ k_i(\gamma_{i-1}^n)}{h_i}(\gamma_{i}^n-\gamma_{i-1}^n ) -\frac{k_i(\gamma_{i-1}^n)\gamma_{i}^n}{h_i}\\
    & =
    \frac{k_i(\gamma_{i}^n)\gamma_{i}^n-k_i(\gamma_{i-1}^n)\gamma_{i-1}^n }{h_i} \, = Q_i(\gamma^n).
\end{align*}

\section{Pseudocode of the used Katzenelson algorithm}
\label{App2}
The $\oslash$ sign denotes the element-wise division (Hadamard division). The $\text{rand}()$ notation stands for a random uniform distributed real number in the interval $[-1,1]$.
\begin{algorithm}[ht]
\caption{Katzenelson Algorithm.}
\label{alg:katzenelson}
\begin{algorithmic}[1]
\Require Initial guess $x$, function $\Phi(\cdot)$, Jacobian $D\Phi(\cdot)$, volumetric latent heat vector L, absolute tolerance $\text{t}_a$, relative tolerance $\text{t}_r$, characteristic enthalpy scale $s_x$.
  \State $J \gets D\Phi(x )$
  \State $r \gets \Phi(x)$
  \State $r_{ini} \gets \|r\| $
  \While{$\|r\| \;>\; r_{ini}\, \text{t}_r + \text{t}_a$}
    \State $\text{v} \gets - J^{-1}\,\Phi(x)$
    
    \State $\Lambda \gets \,(-x \oslash v, (\,
    L-x\,) \oslash v )$
    \State $\Lambda^s \gets \text{sort}(\{\Lambda_i\, | i \in \{1, \dots, 2\kappa\} \wedge \Lambda_i > \text{t}_r \wedge ( i>\kappa \Rightarrow L_{i-\kappa} >  s_x \,\text{t}_r)\} )$
    \State $\lambda_1 \gets \min(\Lambda^s_1, 1)$
    \State $\lambda_2 \gets \min(\Lambda^s_2, 1)$
    \If {$ |\lambda_1 - \lambda_2| > \text{t}_r \, \vee \lambda_1 > 1-\text{t}_r$}
          \State $J = D\Phi(x + \frac{\Lambda^{f}_1 + \Lambda^{f}_2}{2} v)$
          \State $\lambda \gets \Lambda^{f}_1$
    \Else
          \State $x \gets x + \frac{\Lambda_1^f}{2} v + \text{rand}() \, s_x \, 10^{-8}$
          \State $\lambda \gets 0$
    \EndIf
    \State $x \gets x + \lambda \,v$
    \State $r \gets \Phi(x)$
  \EndWhile
\end{algorithmic}
\end{algorithm}

For soil heat transfer the parameters $\text{t}_r = 10^{-12}$, $\text{t}_a = 10^{-6}$, $s_x = 10^{6}$ have turned out to work well and where used for most of the performed simulations.

\bibliographystyle{etna}
\bibliography{main}

\begin{thebibliography}{10}

\bibitem{Aalto2018}
{\sc J.~Aalto, O.~Karjalainen, J.~Hjort, and M.~Luoto}, {\em Statistical forecasting of current and future circum-arctic ground temperatures and active layer thickness}, Geophysical Research Letters, 45 (2018), pp.~4889--4898.

\bibitem{Alexiades1993}
{\sc V.~Alexiades and A.~Solomon}, {\em Mathematical Modeling of Melting and Freezing Processes}, Hemisphere, Washington, DC, 1993.

\bibitem{Andreucci2005LectureNO}
{\sc D.~Andreucci and A.~Scarpa}, {\em Lecture notes on the stefan problem}.
\newblock \url{http://www.sbai.uniroma1.it/pubblicazioni/doc/phd_quaderni/02-1-and.pdf} (Accessed: 24 March 2024), 2005.

\bibitem{Aschwanden2012}
{\sc A.~Aschwanden, E.~Bueler, C.~Khroulev, and H.~Blatter}, {\em An enthalpy formulation for glaciers and ice sheets}, Journal of Glaciology, 58 (2012), p.~441–457.

\bibitem{Best2011JULES}
{\sc M.~J. Best, M.~Pryor, D.~B. Clark, G.~G. Rooney, R.~L.~H. Essery, C.~B. Ménard, J.~M. Edwards, M.~A. Hendry, A.~Porson, N.~Gedney, L.~M. Mercado, S.~Sitch, E.~Blyth, O.~Boucher, P.~M. Cox, C.~S.~B. Grimmond, and R.~J. Harding}, {\em The joint uk land environment simulator (jules), model description – part 1: energy and water fluxes}, Geoscientific Model Development, 4 (2011), pp.~677--699.

\bibitem{Ciarlet2002}
{\sc P.~G. Ciarlet}, {\em The Finite Element Method for Elliptic Problems}, Classics in Applied Mathematics, 40, Society for Industrial and Applied Mathematics (SIAM), Philadelphia, PA, 2002.

\bibitem{Crank1984}
{\sc J.~Crank}, {\em Free and Moving Boundary Problems}, The Clarendon Press, Oxford University Press, New York, 1987.

\bibitem{Dipietro2015}
{\sc D.~A. Di~Pietro, M.~Vohralík, and S.~Yousef}, {\em Adaptive regularization, linearization, and discretization and a posteriori error control for the two-phase stefan problem}, Mathematics of Computation, 84 (2015), pp.~153--186.

\bibitem{Ekici2014JSBACH}
{\sc A.~Ekici, C.~Beer, S.~Hagemann, J.~Boike, M.~Langer, and C.~Hauck}, {\em Simulating high-latitude permafrost regions by the jsbach terrestrial ecosystem model}, Geoscientific Model Development, 7 (2014), pp.~631--647.

\bibitem{Elliott1987}
{\sc C.~Elliot}, {\em Error analysis of the enthalpy method for the stefan problem}, IMA Journal of Numerical Analysis, 7 (1987), pp.~61--71.

\bibitem{friedman1968stefan}
{\sc A.~Friedman}, {\em The stefan problem in several space variables}, Transactions of the American Mathematical Society, 133 (1968), pp.~51--87.

\bibitem{Fujisawa1972}
{\sc T.~Fujisawa and E.~S. Kuh}, {\em Piecewise-linear theory of nonlinear networks}, SIAM Journal on Applied Mathematics, 22 (1972), pp.~307--328.

\bibitem{Gantmacher2002}
{\sc F.~P. Gantmacher and M.~G. Krein}, {\em Oscillation matrices and kernels and small vibrations of mechanical systems}, AMS Chelsea Publishing, Providence, RI, 2002.

\bibitem{Gouttevin2012SoilFreezing}
{\sc I.~Gouttevin, G.~Krinner, P.~Ciais, J.~Polcher, and C.~Legout}, {\em Multi-scale validation of a new soil freezing scheme for a land-surface model with physically-based hydrology}, The Cryosphere, 6 (2012), pp.~407--430.

\bibitem{He2023}
{\sc C.~He, P.~Valayamkunnath, M.~Barlage, F.~Chen, D.~Gochis, R.~Cabell, T.~Schneider, R.~Rasmussen, G.-Y. Niu, Z.-L. Yang, D.~Niyogi, and M.~Ek}, {\em The community noah-mp land surface modeling system technical description version 5.0}, {T}ech. {R}eport, National Center for Atmospheric Research (NCAR), 2023.

\bibitem{Katzenelson1965}
{\sc J.~Katzenelson}, {\em An algorithm for solving nonlinear resistor networks}, Bell System Technical Journal, 44 (1965), pp.~1605--1620.

\bibitem{Lange2022Isimip}
{\sc S.~Lange, M.~Mengel, S.~Treu, and M.~Büchner}, {\em Isimip3a atmospheric climate input data}, 2022.
\newblock \url{https://doi.org/10.48364/ISIMIP.982724}.

\bibitem{CLM52018}
{\sc D.~Lawrence, R.~Fisher, C.~Koven, K.~Oleson, S.~Swenson, M.~Vertenstein, B.~Andre, G.~Bonan, B.~Ghimire, L.~van Kampenhout, D.~Kennedy, E.~Kluzek, R.~Knox, P.~Lawrence, F.~Li, H.~Li, D.~Lombardozzi, Y.~Lu, J.~Perket, W.~Riley, W.~Sacks, M.~Shi, W.~Wieder, C.~Xu, A.~Ali, A.~Badger, G.~Bisht, P.~Broxton, M.~Brunke, J.~Buzan, M.~Clark, T.~Craig, K.~Dahlin, B.~Drewniak, L.~Emmons, J.~Fisher, M.~Flanner, P.~Gentine, J.~Leneaerts, S.~Levis, L.~R. Leung, W.~Lipscomb, J.~Pelletier, D.~M. Ricciuto, B.~Sanderson, J.~Shuman, A.~Slater, Z.~Subin, J.~Tang, A.~Tawfik, Q.~Thomas, S.~Tilmes, F.~Vitt, and X.~Zeng}, {\em Technical description of version 5.0 of the community land model (clm)}, {T}ech. {R}eport, National Center for Atmospheric Research (NCAR), 2018.

\bibitem{Masson2021SURFEX}
{\sc V.~Masson, P.~Le~Moigne, E.~Martin, S.~Faroux, R.~Alkama, S.~Belamari, A.~Barbu, A.~Boone, F.~Bouyssel, P.~Chambon, A.-L. Gibelin, G.~Pigeon, R.~Salgado, Y.~Seity, and F.~Taillefer}, {\em {SURFEX v8.1 Scientific Documentation}}, {T}ech. {R}eport, Météo-France, CNRM, 2021.

\bibitem{MPI2024}
{\sc {\relax Model Development Team Max-Planck-Institut für Meterologie}}, {\em {MPI-ESM 1.2.01p7}}, 2024.
\newblock \url{https://doi.org/10.17617/3.H44EN5}.

\bibitem{Nicolskiy2007}
{\sc D.~J. Nicolsky, V.~E. Romanovsky, V.~A. Alexeev, and D.~M. Lawrence}, {\em Improved modeling of permafrost dynamics in a gcm land-surface scheme}, Geophysical Research Letters, 34 (2007).
\newblock Paper No. L08501, 5 pages.

\bibitem{Niu2011NoahMP}
{\sc G.-Y. Niu, Z.-L. Yang, K.~E. Mitchell, F.~Chen, M.~B. Ek, M.~Barlage, A.~Kumar, K.~Manning, D.~Niyogi, E.~Rosero, M.~Tewari, and Y.~Xia}, {\em The community noah land surface model with multiparameterization options (noah-mp): 1. model description and evaluation with local-scale measurements}, Journal of Geophysical Research, 116 (2011).
\newblock Paper No. D12109, 19 pages.

\bibitem{Nochetto1991}
{\sc R.~H. Nochetto, M.~Paolini, and C.~Verdi}, {\em An adaptive finite element method for two-phase stefan problems in two space dimensions. part i: stability and error estimates}, Mathematics of Computation, 57 (1991), pp.~73--108.

\bibitem{Nord2021}
{\sc J.~Nord, P.~Anthoni, K.~Gregor, A.~Gustafson, S.~Hantson, M.~Lindeskog, B.~Meyer, P.~Miller, L.~Nieradzik, S.~Olin, P.~Papastefanou, B.~Smith, J.~Tang, and D.~Wårlind}, {\em Lpj-guess release v4.1.1 model code}.
\newblock \url{https://doi.org/10.5281/zenodo.8065737}.

\bibitem{Obu2021}
{\sc J.~Obu}, {\em How much of the earth's surface is underlain by permafrost?}, Journal of Geophysical Research: Earth Surface, 126 (2021).
\newblock Paper No. e2021JF006123, 5 pages.

\bibitem{Rubenstein1971}
{\sc L.~I. Rubenšteĭn}, {\em Problema Stefana}, Izdat. ``Zvaigzne'', Riga, 1967.

\bibitem{Schaphoff2018}
{\sc S.~Schaphoff, W.~von Bloh, A.~Rammig, K.~Thonicke, H.~Biemans, M.~Forkel, D.~Gerten, J.~Heinke, J.~J{\"a}germeyr, J.~Knauer, F.~Langerwisch, W.~Lucht, C.~M{\"u}ller, S.~Rolinski, and K.~Waha}, {\em Lpjml4 -- a dynamic global vegetation model with managed land -- part 1: Model description}, Geoscientific Model Development, 11 (2018), pp.~1343--1375.

\bibitem{Scholtes2012}
{\sc S.~Scholtes}, {\em Introduction to piecewise differentiable equations}, SpringerBriefs in Optimization, Springer, New York, 2012.

\bibitem{tarzia2015}
{\sc D.~Tarzia}, {\em Relationship between neumann solutions for two-phase lame-clapeyron-stefan problems with convective and temperature boundary conditions}, Thermal Science, 21 (2017), pp.~187--197.

\bibitem{Tubini2021}
{\sc N.~Tubini, S.~Gruber, and R.~Rigon}, {\em A method for solving heat transfer with phase change in ice or soil that allows for large time steps while guaranteeing energy conservation}, The Cryosphere, 15 (2021), pp.~2541--2568.

\bibitem{Visintin2008}
{\sc A.~Visintin}, {\em Introduction to stefan-type problems (chapter 8)}, in Evolutionary Equations, C.~M. Dafermos and M.~Pokorny, eds., Handbook of Differential Equations, 4, Elsevier, Amsterdam, 2008, pp.~377--484.

\bibitem{Voller1981}
{\sc V.~Voller and M.~Cross}, {\em Accurate solutions of moving boundary problems using the enthalpy method}, International Journal of Heat and Mass Transfer, 24 (1981), pp.~545--556.

\bibitem{Bloh2018}
{\sc W.~von Bloh, S.~Schaphoff, C.~M\"uller, S.~Rolinski, K.~Waha, and S.~Zaehle}, {\em Implementing the nitrogen cycle into the dynamic global vegetation, hydrology, and crop growth model lpjml (version 5.0)}, Geoscientific Model Development, 11 (2018), pp.~2789--2812.

\bibitem{Wania2009}
{\sc R.~Wania, I.~Ross, and I.~C. Prentice}, {\em Integrating peatlands and permafrost into a dynamic global vegetation model: 1. evaluation and sensitivity of physical land surface processes}, Global Biogeochemical Cycles, 23 (2009).
\newblock Paper No. GB3014, 19 pages.

\bibitem{Zulehner2008}
{\sc W.~Zulehner}, {\em Numerische Mathematik, Band 1: Stationäre Probleme}, Birkhäuser, Basel, 2008.

\bibitem{Zulehner2011}
\leavevmode\vrule height 2pt depth -1.6pt width 23pt, {\em Numerische Mathematik, Band 2: Interstationäre Probleme}, Birkhäuser, Basel, 2011.

\end{thebibliography}

\end{document}